\numberwithin{equation}{section}
\theoremstyle{plain}
\newtheorem{theorem}[equation]{Theorem}
\newtheorem{proposition}[equation]{Proposition}
\newtheorem{lemma}[equation]{Lemma} 
\newtheorem{corollary}[equation]{Corollary}
\theoremstyle{definition}
\theoremstyle{remark}
\newtheorem{remark}[equation]{Remark} 
\newtheorem*{ack}{Acknowledgements}
\newcommand{\bbZ}{\mathbb Z}
\newcommand{\bfi}{\mathbf{i}}
\newcommand{\bSpec}{\operatorname{Spc}}
\newcommand{\colim}{\operatorname{colim}}
\newcommand{\cat}{\mathcal}
\newcommand{\dbcat}[1]{{\mathbf D}^{\mathrm{b}}(\operatorname{mod}#1)}
\newcommand{\dcat}[1]{\mathbf{D}(#1)}
\newcommand{\duals}[1]{{#1}^{\operatorname{d}}}
\newcommand{\End}{\operatorname{End}}
\newcommand{\fa}{\mathfrak{a}}
\newcommand{\fm}{\mathfrak{m}}
\newcommand{\fp}{\mathfrak{p}}
\newcommand{\fq}{\mathfrak{q}}
\newcommand{\gam}{\varGamma} 
\newcommand{\Hom}{\operatorname{Hom}}
\newcommand{\fHom}{\operatorname{\mathcal{H}\!\!\;\mathit{om}}}
\newcommand{\RHom}{\operatorname{{\mathbf R}Hom}}
\newcommand{\Inj}{\operatorname{Inj}}
\newcommand{\kos}[2]{{#1}/\!\!/{#2}}
\newcommand{\KInj}[1]{\mathbf K(\Inj #1)}
\newcommand{\lotimes}{\otimes^{\mathbf L}}
\renewcommand{\mod}{\operatorname{mod}}
\newcommand{\Mod}{\operatorname{Mod}}
\newcommand{\one}{\mathds 1}
\newcommand{\Proj}{\operatorname{Proj}}
\newcommand{\Spec}{\operatorname{Spec}}
\newcommand{\StMod}{\operatorname{StMod}}
\newcommand{\stmod}{\operatorname{stmod}}
\newcommand{\supp}{\operatorname{supp}}
\newcommand{\thick}{\operatorname{thick}}
\newcommand{\Thick}{\operatorname{Thick}}
\newcommand{\phat}{{}^{^\wedge}_\fp}
\newcommand{\iso}{\xrightarrow{\raisebox{-.4ex}[0ex][0ex]{$\scriptstyle{\sim}$}}}
\newcommand{\longiso}{\xrightarrow{\ \raisebox{-.4ex}[0ex][0ex]{$\scriptstyle{\sim}$}\ }}
\title[Local dualisable modular representations]{The spectrum of local dualisable\\ modular representations}
\author[Benson, Iyengar, Krause, and Pevtsova]{Dave Benson, Srikanth
  B. Iyengar, Henning Krause \\ and Julia Pevtsova}
\address{Dave Benson \\ 
Institute of Mathematics\\ 
University of Aberdeen\\ 
King's College\\ 
Aberdeen AB24 3UE\\ 
Scotland U.K.}
\address{Srikanth B. Iyengar\\ 
Department of Mathematics\\
University of Utah\\ 
Salt Lake City, UT 84112\\ 
U.S.A.}
\address{Henning Krause\\ 
Fakult\"at f\"ur Mathematik\\ 
Universit\"at Bielefeld\\ 
33501 Bielefeld\\ 
Germany.}
\address{Julia Pevtsova\\ 
Department of Mathematics\\ 
University of Washington\\ 
Seattle, WA 98195\\ 
U.S.A.}
\begin{document}

\begin{abstract} 
For a point $\mathfrak{p}$ in the spectrum of the cohomology ring of a finite group $G$ over a field $k$, we calculate the spectrum for the subcategory of dualisable objects inside the tensor triangulated category of $\fp$-local and $\fp$-torsion objects in the (big) stable module category of the group algebra $kG$.
\end{abstract}

\keywords{Balmer spectrum, dualisable object, Quillen stratification, lattice of thick tensor ideals, stable module category of a finite group}

\subjclass[2020]{20C20 (primary); 18G80, 20J06 (secondary)}

\date{\today}

\maketitle

\section{Introduction}

Let $G$ be a finite group and $k$ a field of characteristic $p$.  The stable category of $kG$-modules, denoted $\StMod{kG}$, is a tensor triangulated category that has been much studied over the last few decades. The compact objects form the stable category of finitely generated modules $\stmod{kG}$, whose thick tensor ideals were classified by 
Benson, Carlson and Rickard~\cite{Benson/Carlson/Rickard:1997a}.  The localising tensor ideals of $\StMod{kG}$ were later classified by Benson, Iyengar and Krause~\cite{Benson/Iyengar/Krause:2011b}. In both cases, the answers are in terms of the projectivised prime ideal spectrum  of the cohomology ring $\Proj H^*(G,k)$, an object well understood due to the extraordinary work of Quillen~\cite{Quillen:1971b,Quillen:1971c}. Given a homogeneous prime ideal $\fp$ in $\Proj H^*(G,k)$, there is an idempotent functor  $\gam_\fp$ on $\StMod{kG}$ that picks out a minimal localising tensor ideal, to be thought of as a stratum of the stable module category at that prime. Every localising tensor ideal is generated by the minimal ones it contains, and this gives a bijection between the localising tensor ideals of $\StMod{kG}$ and the subsets of $\Proj H^*(G,k)$. For the thick tensor ideals in $\stmod{kG}$, only the specialisation closed subsets are relevant.

In~\cite{Benson/Iyengar/Krause/Pevtsova:2024c} we studied the subcategory $\duals{(\gam_\fp\StMod{kG})}$ of dualisable objects in the tensor  triangulated category $\gam_\fp\StMod{kG}$.  This an essentially small tensor triangulated category and our purpose in this  paper is to describe the thick tensor ideals of $\duals{(\gam_\fp\StMod{kG})}$. The answer is in terms of the spectrum of the completion of the localised cohomology ring, $\Spec H^*(G,k)\phat$. The following is our main theorem.

\begin{theorem}
\label{th:main}
Cohomological support gives a bijection between the thick tensor ideals in
$\duals{(\gam_\fp\StMod{kG})}$ and the specialisation closed subsets of $\Spec H^*(G,k)\phat$.
\end{theorem}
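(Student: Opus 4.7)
The approach is to compute the Balmer spectrum $\bSpec\duals{(\gam_\fp\StMod{kG})}$ and identify it homeomorphically with the Noetherian spectral space $\Spec H^*(G,k)\phat$; once this is done, Balmer's classification of thick tensor ideals in a rigid essentially small tt-category, combined with the equality of Thomason and specialisation closed subsets in a Noetherian space, yields Theorem~\ref{th:main}. The category $\duals{(\gam_\fp\StMod{kG})}$ is rigid and essentially small by construction, and the cohomological support in the statement is precisely the map arising from this identification.

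To build the comparison map $\rho\colon \bSpec\bigl(\duals{(\gam_\fp\StMod{kG})}\bigr) \to \Spec H^*(G,k)\phat$, I would first identify the graded endomorphism ring of the tensor unit $\gam_\fp k$ with $H^*(G,k)\phat$; the appearance of the completion is natural, since every $\fp$-local $\fp$-torsion object is already $\fp$-complete. The resulting central action of $H^*(G,k)\phat$ on the tt-category produces Balmer's canonical map $\rho$. For surjectivity, given any prime $\fq \in \Spec H^*(G,k)\phat$, one writes down a Koszul object on a finite sequence of elements generating $\fq$ up to radical; such Koszul objects are finitely built from the unit, hence dualisable, and their supports realise the required primes.

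The essential obstacle lies in proving injectivity of $\rho$, which is equivalent to the detection statement that a dualisable object $X \in \gam_\fp\StMod{kG}$ with empty cohomological support in $\Spec H^*(G,k)\phat$ must be zero. To access this, I would combine the structural description of $\duals{(\gam_\fp\StMod{kG})}$ from~\cite{Benson/Iyengar/Krause/Pevtsova:2024c} with the stratification of $\StMod{kG}$ by $\Proj H^*(G,k)$ established in~\cite{Benson/Iyengar/Krause:2011b}: the latter detects vanishing of objects through their supports in $\Proj H^*(G,k)$, while the former lets one transfer this detection across the completion that intervenes between $\Proj H^*(G,k)_\fp$ and $\Spec H^*(G,k)\phat$. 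Once injectivity is secured, the continuous bijection $\rho$ between spectral spaces is automatically a homeomorphism, and Balmer's theorem then produces the claimed bijection between thick tensor ideals and specialisation closed subsets.
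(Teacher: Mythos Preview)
Your outline has a genuine gap at the injectivity step. The assertion that injectivity of the comparison map $\rho$ is equivalent to the detection statement ``$\supp_R(X)=\emptyset\Rightarrow X=0$'' is incorrect. The latter is almost vacuous here: since $R=H^*(G,k)\phat$ is local, empty support means $\End^*(X)$ vanishes after localising at the maximal ideal, hence $\End^*(X)=0$ and $X=0$. What injectivity of $\rho$ actually requires is the full Hopkins--Neeman type condition: whenever $\supp_R(Y)\subseteq\supp_R(X)$ for dualisable $X,Y$, one must have $\thick^\otimes(Y)\subseteq\thick^\otimes(X)$. This is the entire content of the theorem and is not accessible by the route you sketch. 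In particular, the stratification of $\StMod{kG}$ from \cite{Benson/Iyengar/Krause:2011b} is indexed by $\Proj H^*(G,k)$, not by $\Spec H^*(G,k)\phat$; as the introduction of the paper emphasises (with the example $x^2-y^2(1-y)$), completion creates new primes that are invisible to that stratification, so there is no mechanism to ``transfer detection across the completion'' to separate them.

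The paper takes a completely different route. It first settles the elementary abelian case by passing, via the BGG correspondence, from $\KInj{kE}$ to the derived category of a polynomial ring $S$, where the analogous classification for $\thick(\gam_\fp S)$ is established by a Morita equivalence with $\thick(\widehat S)$ and then invoking the Hopkins--Neeman theorem over the completed ring. For a general finite group $G$, the result is deduced by assembling the elementary abelian pieces through Quillen stratification: one builds a commutative square comparing $\bSpec\duals{(\gam_\fp\cat T_G)}$ and $\Spec H^*(G,k)\phat$ with the colimits over $\cat A(G)_\fp$ of the corresponding objects for elementary abelian subgroups, proves the comparison is a homeomorphism on each $E$, and then uses Chouinard's theorem (nil-faithfulness of restriction) together with Quillen's $F$-isomorphism to force the map for $G$ to be a homeomorphism. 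None of this structure is present in your proposal.
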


The completion of a local ring does change the spectrum. For  example, let $k$ be a field of odd characteristic and $R=k[x,y]$, $\fp=(x,y)$. Then the polynomial $x^2-y^2(1-y)$ is prime in $R_\fp$, but factorises in $R\phat$ as  $(x+y\sqrt{1-y})(x-y\sqrt{1-y})$. One can mimic this example in the cohomology of an elementary abelian group.

To prove the theorem, we first deal with the case of an elementary abelian $p$-group.
In this case, we use the same route as in~\cite{Benson/Iyengar/Krause:2011b} to transfer the problem into the corresponding problem in a polynomial ring localised at a prime. In that context, we essentially solved the problem in~\cite{Benson/Iyengar/Krause/Pevtsova:2024b}; see also Theorem~\ref{th:dga}. To go from elementary abelian $p$-subgroups to arbitrary finite groups, we use the Quillen stratification, via induction and restriction. Along the way, we also obtain an analogue of the stratification results of Quillen, and of Avrunin and Scott~\cite{Avrunin/Scott:1982a}, for local dualisable modules; see Theorem~\ref{th:stratification}.

\section{Lattice of thick tensor ideals}
\label{se:lattices}
In this section we record a few observations regarding the lattice of thick tensor ideals in a tensor triangulated category.

Let $(\cat C,\otimes,\one)$ be an essentially small tensor triangulated category, with internal function object $\fHom$. An object $x$ is \emph{dualisable} if for each $y\in\cat C $ the natural map
\[
\fHom(x,\one)\otimes y\longrightarrow \fHom(x,y)
\]
is an isomorphism.  We assume that the category $\cat C$ is \emph{rigid}, which means that every object is dualisable. Throughout we also fix  a graded-commutative ring $R$ that acts on $\cat C$ canonically, that is to say, via a map of rings $\phi\colon R\to \End^*_{\cat T}(\one)$. In many cases $R$ is the graded endomorphism ring of $\one$, and $\phi=\mathrm{id}$, but not always.  In what follows we only ever consider homogeneous elements and ideals; a non-homogeneous element has no meaning as an endomorphism of $\one$. In particular,  $\Spec R$ is the set of homogeneous prime ideals of $R$ endowed with the Zariski topology.

\subsection*{Cohomological support}
For a finitely generated ideal $\fa$ of $R$ set
\[
V(\fa)\coloneqq \{\fp\in\Spec R\mid \fa\subseteq\fp\}
\]
and consider the thick subcategory
\[
\cat C_{V(\fa)}\coloneqq \{x\in\cat C\mid \End^*_{\cat C}(x)_\fp=0 \text{ for }
  \fp\in (\Spec R)\setminus V(\fa)\}.
  \]

We recall from \cite{Hovey/Palmieri/Strickland:1997a} the definition of \emph{Koszul objects}. For an element $r$ in $R$ of degree $d$ and an object $x$ in $\cat C$ let $\kos{x}{r}$ denote the cone of the map
$x\xrightarrow{r}\Sigma^{d} x$. For an ideal $\fa=(r_1,\ldots,r_n)$ we set $\kos{x}{\fa}=x_n$, where $x_0=x$ and $x_i=\kos{x_{i-1}}{r_i}$ for $1\leqslant i\leqslant n$. The definition depends on the choice of generators, but the thick subcategory generated by such a Koszul object does not.

\begin{lemma}
  \label{le:koszul}
  We have $\cat C_{V(\fa)}=\thick^\otimes(\kos{\one}{\fa})$.
\end{lemma}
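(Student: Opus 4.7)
The plan is to prove both inclusions via the characterisation that $x\in\cat C_{V(\fa)}$ if and only if each generator $r_i$ of $\fa$ acts nilpotently on $x$, meaning $r_i^{N}\cdot 1_x=0$ in $\End^*_{\cat C}(x)$ for some $N=N(x,i)$.

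I would first establish this characterisation. The localisation $\End^*_{\cat C}(x)_\fp$ vanishes precisely when some $s\in R\setminus\fp$ kills $1_x$, since then $s\cdot f=f\circ(s\cdot 1_x)=0$ for every $f\in\End^*_{\cat C}(x)$. Thus $x\in\cat C_{V(\fa)}$ iff $V(\ann_R(1_x))\subseteq V(\fa)$, i.e.\ $\fa\subseteq\sqrt{\ann_R(1_x)}$. I would also check that the collection of such $x$ is a thick tensor ideal: closure under $\otimes$ is immediate from $r\cdot 1_{x\otimes y}=(r\cdot 1_x)\otimes 1_y$, and for a triangle $x\to y\to z\to\Sigma x$ with $r^{N_1}\cdot 1_x=0$ and $r^{N_2}\cdot 1_z=0$, a short diagram chase in the long exact sequence for $\Hom_{\cat C}(y,-)$ produces $r^{N_1+N_2}\cdot 1_y=0$.

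With this in place, the inclusion $\thick^\otimes(\kos{\one}{\fa})\subseteq\cat C_{V(\fa)}$ follows once each $r_i$ acts nilpotently on $\kos{\one}{\fa}$. For a single $r$, the defining triangle forces $r^2\cdot 1_{\kos{\one}{r}}=0$ (the action factors through the connecting map and then vanishes by composing consecutive arrows in the triangle); iterating through the Koszul construction and using the triangle-closure property just established propagates nilpotence of every $r_i$ onto $\kos{\one}{\fa}$.

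For the reverse inclusion, take $x\in\cat C_{V(\fa)}$ and pick $N_i$ with $r_i^{N_i}\cdot 1_x=0$. Since $r_i^{N_i}$ acts as zero on $x$, tensoring the defining triangle for $\kos{\one}{r_i^{N_i}}$ with $x$ splits, so $x\otimes\kos{\one}{r_i^{N_i}}$ is a sum of two suspensions of $x$; iterating over $i=1,\ldots,n$, I get that $x$ is a direct summand of $x\otimes K$, where $K\coloneqq\kos{\one}{r_1^{N_1},\ldots,r_n^{N_n}}$. To finish I show $K\in\thick(\kos{\one}{\fa})$: a standard octahedron gives a triangle $\kos{\one}{r^k}\to\kos{\one}{r^{k+1}}\to\kos{\one}{r}$, so by induction $\kos{\one}{r_i^{N_i}}\in\thick(\kos{\one}{r_i})$, and then $A_i\in\thick(B_i)$ for each $i$ implies $\bigotimes_i A_i\in\thick(\bigotimes_i B_i)$ (tensoring with a fixed object is triangulated, applied one factor at a time). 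This places $K\in\thick(\kos{\one}{\fa})$, whence $x$ lies in $\thick^\otimes(\kos{\one}{\fa})$ as a retract of $x\otimes K$. The main obstacle is the reverse direction: coordinating the splittings across all the generators and the octahedral induction on powers so as to conclude that the Koszul objects built from $(r_i^{N_i})$ and from $(r_i)$ generate the same thick subcategory.
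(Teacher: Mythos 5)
Your argument is correct, and it takes a genuinely different route from the paper. The paper simply invokes \cite[Proposition~3.10]{Benson/Iyengar/Krause:2015a}, which gives $\cat C_{V(\fa)}=\thick(\{\kos{x}{\fa}\mid x\in\cat C\})$ for categories with a central ring action, and then observes $\kos{x}{\fa}=\kos{\one}{\fa}\otimes x$ so that the right-hand side is $\thick^\otimes(\kos{\one}{\fa})$. You instead reprove the underlying classification from scratch: you unwind the definition of $\cat C_{V(\fa)}$ to the nilpotence condition $\fa\subseteq\sqrt{\ann_R(1_x)}$, verify directly that this defines a thick tensor ideal, and then use the two classical Koszul facts (the order-two nilpotence $r^2\cdot 1_{\kos{\one}{r}}=0$ for one inclusion, and for the other the splitting of $x\otimes\kos{\one}{r^N}$ when $r^N\cdot 1_x=0$ together with the octahedral induction $\kos{\one}{r^{N}}\in\thick(\kos{\one}{r})$). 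This buys a self-contained proof at the cost of reproducing, in this special tensor-ideal setting, essentially the content of the cited proposition; the paper's citation is more economical and emphasises that the result is an instance of a general local-global principle for central support. Your steps all check out: the identification of $\End^*_{\cat C}(x)_\fp=0$ with $\ann_R(1_x)\not\subseteq\fp$ is legitimate because $1_x$ is the ring identity, the triangle-closure estimate $r^{N_1+N_2}\cdot 1_y=0$ is the standard two-step factorisation through the long exact sequence, the splitting and octahedron arguments are correct, and finiteness of $\fa$ is used exactly where needed (finitely many generators to iterate over). One small remark: in the forward direction it is slightly cleaner to use the tensor decomposition $\kos{\one}{\fa}=\bigotimes_i\kos{\one}{r_i}$ and the tensor-ideal property, rather than propagating nilpotence through the iterated cones, though both work.
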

\begin{proof}
  The equality
  $\cat C_{V(\fa)}=\thick(\{\kos{x}{\fa}\mid x\in\cat C\})$ follows from
  \cite[Proposition~3.10]{Benson/Iyengar/Krause:2015a}. It remains
  to note that $\kos{x}{\fa}= \kos{\one}{\fa}\otimes x$.
\end{proof}

A subset $V\subseteq\Spec R$ is said to be a \emph{Thomason} subset if it can be written as $V=\bigcup_i V_i$ such that each $(\Spec R)\setminus V_i$ is Zariski open and quasi-compact; that is to say, $V_i=V(\fa_i)$ for some finitely generated ideal $\fa_i$ in $R$. By definition, the Thomason subsets are the open subsets for the \emph{Hochster dual topology} on $\Spec R$; see \cite{Hochster:1969a}. When $R$ is noetherian,  Thomason subsets are precisely the specialisation closed subsets of $\Spec R$.   

The open subsets of any space
form a \emph{frame}, that is to say,  a complete lattice where arbitrary joins distribute over finite meets. A morphism of frames is a map that preserves arbitrary joins and finite meets; see \cite{Johnstone:1982a} for details. An element $x$ in a complete lattice is \emph{finite} or \emph{compact} if $x\le\bigvee_{i\in I}y_i$ implies
$x\le\bigvee_{i\in J}y_i$ for some finite subset $J\subseteq I$. A frame is \emph{coherent} when every element can be written as a join of finite elements and the finite elements form a sublattice. 

For example, the Thomason subsets of $\Spec R$ form a coherent frame, and a simple calculation shows that the finite elements are precisely the sets of the form $V(\fa)$ where $\fa$ is a finitely generated ideal. Let $\Thick^{\otimes}(\cat C)$ denote the lattice of tensor ideal thick subcategories in $\cat C$; it is also a coherent frame with finite elements thick tensor ideals of the form $\thick^{\otimes}(x)$ for $x\in \cat C$; see \cite[Theorem~3.1.9]{Kock/Pitsch:2017a}. 

For a Thomason subset $V\subseteq \Spec R$ we set
  \[
  \tau_R(V)\coloneqq \bigvee_{V(\fa)\subseteq V} \cat C_{V(\fa)}\,.
\]
In \cite[Proposition~4.9]{Krause:2023a} it is shown that this  induces a map
\begin{equation}
  \label{eq:tau}
  \{\text{Thomason subsets of }\Spec
  R\}\xrightarrow{\ \tau_R\ }\Thick^\otimes(\cat C)\,.
\end{equation}  

For each $x\in \cat C$ the \emph{cohomological support} is by definition
\[
\supp_R(x) \coloneqq \{\fp\in\Spec R\mid \End^*_{\cat C}(x)_\fp\neq 0 \}\,.
\]
We say $\supp_R$ is \emph{finite} if for each $x\in\cat C$ there is a finitely generated ideal $\fa$ of $R$ such that $\supp_R(x)=V(\fa)$. This property holds, for instance, when $R$ is noetherian and the $R$-module $\End^*_\cat C(x)$ is finitely generated for all $x\in\cat C$. When $\supp_R$ is finite one gets a frame morphism
\begin{equation}
\label{eq:class-tt}
\supp_R\colon \Thick^\otimes(\cat C)\xrightarrow{\ \supp_R\ }\{\text{Thomason subsets of }\Spec R\}
\end{equation}
that restricts to a map on the sublattice of finite elements.  More is true.

\begin{lemma}
  The cohomological support $\supp_R$ is finite if and only if there is an adjoint
pair of maps between posets
\[
\begin{tikzcd}
\Thick^\otimes(\cat C) \ar[rr,yshift=2.5,"\supp_R"] &&  \ar[ll,yshift=-2.5,"\tau_R"]
\{\text{Thomason subsets of }\Spec R\}\,;
\end{tikzcd}
\]
that is to say, for $\cat D\subseteq\cat C$ and $V\subseteq\Spec R$ one has
\[
\supp_R(\cat D)\subseteq V\quad\iff\quad\cat D\subseteq\tau_R(V)\,.
\]
\end{lemma}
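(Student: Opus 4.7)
The plan is to prove the biconditional by establishing each direction separately; the core identification is that the displayed condition is exactly the poset-theoretic adjointness $\supp_R \dashv \tau_R$.

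For the forward direction, assume $\supp_R$ is finite. I will verify the counit and unit inequalities. The counit $\supp_R(\tau_R(V)) \subseteq V$ follows once I observe that $\{z \in \cat C : \supp_R(z) \subseteq V\}$ is a thick tensor ideal (support is subadditive across triangles and decreases under tensoring) containing each summand $\cat C_{V(\fa)}$ with $V(\fa) \subseteq V$, and therefore contains their join $\tau_R(V)$. For the unit $\cat D \subseteq \tau_R(\supp_R(\cat D))$, given $x \in \cat D$ the finiteness hypothesis yields a finitely generated ideal $\fa$ with $\supp_R(x) = V(\fa)$; then $x \in \cat C_{V(\fa)}$ directly from the definition, and since $V(\fa) \subseteq \supp_R(\cat D)$ the inclusion $\cat C_{V(\fa)} \subseteq \tau_R(\supp_R(\cat D))$ gives $x \in \tau_R(\supp_R(\cat D))$.

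For the reverse direction, assume the adjunction. Fix $x \in \cat C$ and set $V \coloneqq \supp_R(\thick^\otimes(x))$; this is a Thomason subset since, by assumption, $\supp_R$ takes values in that frame. The adjunction gives $\thick^\otimes(x) \subseteq \tau_R(V) = \bigvee_{V(\fa) \subseteq V} \cat C_{V(\fa)}$. Here I invoke the key ingredient: by the coherence of $\Thick^\otimes(\cat C)$ recorded before the lemma, $\thick^\otimes(x)$ is a finite element of that frame, so it is contained in a finite subjoin $\cat C_{V(\fa_1)} \vee \cdots \vee \cat C_{V(\fa_n)}$ with each $V(\fa_i) \subseteq V$. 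Taking $\fa \coloneqq \fa_1 \cdots \fa_n$, which is finitely generated with $V(\fa) = V(\fa_1) \cup \cdots \cup V(\fa_n)$, each $\cat C_{V(\fa_i)}$ sits inside the thick tensor ideal $\cat C_{V(\fa)}$ (by the definition of these subcategories), so $x \in \cat C_{V(\fa)}$ and hence $\supp_R(x) \subseteq V(\fa)$. Combined with $V(\fa) \subseteq V = \supp_R(x)$, this forces $\supp_R(x) = V(\fa)$, witnessing finiteness.

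The main obstacle is the reverse direction, where one must bridge from an abstract adjunction to the concrete finiteness statement about individual objects. The bridge is the compactness of $\thick^\otimes(x)$ inside the coherent frame $\Thick^\otimes(\cat C)$, together with the elementary fact that products of finitely generated ideals are finitely generated and that $V(\fa\fb) = V(\fa) \cup V(\fb)$. The forward direction, by contrast, is essentially bookkeeping against the definition of $\tau_R$ and the fact that cohomological support controls membership in $\cat C_{V(\fa)}$.
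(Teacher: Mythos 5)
Your proof is correct and takes essentially the same route as the paper's: the forward direction rests on the observation that, under the finiteness hypothesis, $\supp_R(x)$ is a finite element of the frame of Thomason subsets, while the reverse direction uses that $\thick^\otimes(x)$ is a finite element of the coherent frame $\Thick^\otimes(\cat C)$ together with the identification of finite elements of the Thomason frame with sets of the form $V(\fa)$. The only cosmetic difference is that you unwind the standard fact ``a left adjoint whose right adjoint preserves joins preserves finite elements,'' which the paper simply invokes; and in the forward direction your counit step rests on the claim that $\{z : \supp_R(z)\subseteq V\}$ is a thick tensor ideal, whereas the paper's chain of equivalences instead tacitly uses that $\{\cat C_{V(\fa)}: V(\fa)\subseteq V\}$ is directed, so the join in $\tau_R(V)$ is a union---either justification is routine.
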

\begin{proof}
  Suppose  $\supp_R$ is finite, so that one gets a well-defined map from the frame of thick tensor ideals of $\cat C$ to the frame of Thomason subsets of $\Spec R$.  Let $V$ be a Thomason subset of $\Spec R$ and
  $x\in \cat C$. Then we have
\begin{align*}
\supp_R(x)\subseteq V&\iff \supp_R(x)\subseteq V(\fa) \text{ for some }V(\fa)\subseteq V\\
                     &\iff x\in\cat C_{V(\fa)}\text{ for some }V(\fa)\subseteq V\\
                     &\iff  x\in\tau_R(V).
\end{align*}
The first equivalence holds because $\supp_R(x)$ is finite as an element in the frame of Thomason subsets of $\Spec R$, by hypothesis. The other ones are immediate from the definitions.

Now suppose ($\supp_R,\tau_R$) form an adjoint pair. This implies that
$\supp_R(x)$ is Thomason for each   $x\in \cat C$. A standard argument
shows that the left adjoint preserves finiteness if the right
adjoint  preserves all joins. It remains to recall that $\tau_R$ preserves joins.
\end{proof}

A consequence of the preceding result is that $\supp_R$ is an isomorphism if and only if $\tau_R$ is an isomorphism. This can be characterised as follows.

\begin{lemma}
\label{le:hopkins}
  Cohomological support induces an isomorphism
  \begin{equation*}
     \Thick^\otimes(\cat C)\longrightarrow\{\text{Thomason subsets of }\Spec
     R\}\,.
\end{equation*}  
  if and only if the following hold:
  \begin{enumerate}[\quad\rm(1)]
  \item $\supp_R$ is finite;
  \item $\supp_R(\cat C)=\Spec R$;
\item 
$\supp_R(y)\subseteq\supp_R(x)$ implies $\thick^\otimes(y)\subseteq\thick^\otimes(x)$
for all $x,y\in\cat C$.
\end{enumerate}
\end{lemma}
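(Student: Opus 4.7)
The plan is to verify the two directions separately: \emph{iso $\Rightarrow$ (1), (2), (3)} from general frame theory, and \emph{(1), (2), (3) $\Rightarrow$ iso} by checking the two round-trip identities of the adjunction provided by the preceding lemma.

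For the forward direction, the mere existence of $\supp_R$ as a frame morphism already forces (1), since a frame morphism must send the finite element $\thick^\otimes(x)$ to a finite element of the Thomason frame, and this image equals $\supp_R(x)$, which must then be of the form $V(\fa)$ with $\fa$ finitely generated. Preservation of the top element yields (2), and order-reflection yields (3), since $\supp_R(y) \subseteq \supp_R(x)$ can be rewritten as $\supp_R(\thick^\otimes(y)) \subseteq \supp_R(\thick^\otimes(x))$.

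For the backward direction, assuming (1), (2), and (3), the preceding lemma supplies the adjoint pair $(\supp_R,\tau_R)$, and it suffices to show that both the unit $\cat D \subseteq \tau_R(\supp_R(\cat D))$ and the counit $\supp_R(\tau_R(V)) \subseteq V$ are equalities. For the unit, I would take $x \in \tau_R(\supp_R(\cat D))$; applying the counit one gets $\supp_R(x) \subseteq \supp_R(\cat D) = \bigcup_{y \in \cat D}\supp_R(y)$. By (1), $\supp_R(x) = V(\fa)$ is a finite element of the Thomason frame, hence quasi-compact in the Hochster-dual topology, so there is a finite subcover $\supp_R(x) \subseteq \supp_R(y_1)\cup\cdots\cup\supp_R(y_n)$ with each $y_i \in \cat D$. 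Setting $z = y_1 \oplus \cdots \oplus y_n$ gives $\supp_R(z) = \bigcup_i \supp_R(y_i) \supseteq \supp_R(x)$, and (3) then forces $x \in \thick^\otimes(z) \subseteq \cat D$.

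For the counit, given a Thomason subset $V$ and $\fp \in V$, I would pick $\fa$ finitely generated with $\fp \in V(\fa) \subseteq V$, and by (2) pick $x \in \cat C$ with $\fp \in \supp_R(x)$. The Koszul object $y = \kos{\one}{\fa} \otimes x$ lies in $\thick^\otimes(\kos{\one}{\fa}) = \cat C_{V(\fa)} \subseteq \tau_R(V)$ by Lemma~\ref{le:koszul}, so it remains to verify that $\fp \in \supp_R(y)$. The hard part will be precisely this last point: one needs the containment $V(\fa) \cap \supp_R(x) \subseteq \supp_R(\kos{\one}{\fa} \otimes x)$, which is the standard Koszul calculus fact (cf.~\cite{Benson/Iyengar/Krause:2015a}) asserting that, after localising at $\fp$, the Koszul object built from elements of the maximal ideal $\fp R_\fp$ does not annihilate the non-zero module $\End^*_{\cat C}(x)_\fp$. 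Everything else is formal adjunction yoga.
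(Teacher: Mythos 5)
Your argument is correct and follows essentially the same route as the paper's proof: forward direction via preservation of finite elements and the top element, backward direction via the adjunction $(\supp_R,\tau_R)$ from the preceding lemma together with Koszul objects. One small imprecision: you write that ``a frame morphism must send the finite element $\thick^\otimes(x)$ to a finite element,'' but frame morphisms do not in general preserve finiteness; what you need (and have, since the hypothesis is that $\supp_R$ is an \emph{isomorphism}) is that a frame \emph{isomorphism} preserves finite elements. With that wording fixed the argument is sound, and it is somewhat more explicit than the paper's, which establishes $\tau_R\supp_R=\mathrm{id}$ on finite elements and then concludes by observing that $\supp_R$ is surjective because $\supp_R(\kos{\one}{\fa})=V(\fa)$ (implicitly using that $\cat C=\thick^\otimes(\one)$, so (2) forces $\supp_R(\one)=\Spec R$); your version, forming $\kos{\one}{\fa}\otimes x$ with $x$ supplied by (2), proves the same thing without needing that remark.
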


\begin{proof}
The forward direction is clear, since the finite elements in $\Thick^\otimes(\cat C)$ are precisely the subcategories of the form  $\thick^\otimes(x)$ for some $x\in\cat C$. Conversely, suppose conditions (1)--(3) hold.  Then $\tau_R\supp_R(\thick^\otimes(x))=\thick^\otimes(x)$ holds and hence
$\tau_R\supp_R=\mathrm{id}$. On the other hand, $\supp_R$ is surjective since $\supp_R(\kos{\one}{\fa})=V(\fa)$ for any finitely generated ideal $\fa$ of $R$.   
\end{proof}

\begin{remark}\label{re:hopkins}
    There is an analogue of Lemma~\ref{le:hopkins} for the lattice of all thick subcategories, with essentially same proof. The tensor unit is then replaced by a generator of $\cat C$, which does exist when $\supp_R$ is finite and $\supp_R(\cat C)=\Spec R$.
\end{remark}

\subsection*{Tensor triangulated support}
Following \cite{Balmer:2005a} we consider the \emph{Balmer spectrum}  $\bSpec\cat C$ of $\cat C$,
and for any $x\in\cat C$ set
\[
\supp_\otimes(x) \coloneqq\{\cat P\in \bSpec\cat C\mid x\not\in\cat P\}\,.
\]
By \cite[Theorem~4.10]{Balmer:2005a} there are mutually inverse isomorphisms of frames
\begin{equation}
\begin{tikzcd}
\Thick^{\otimes}(\cat C) \ar[rr,yshift=2.5,"\supp_\otimes"] &&  \ar[ll,yshift=-2.5,"\tau_\otimes"]
\{\text{Thomason subsets of }\bSpec\cat C\}
\end{tikzcd}
\end{equation}
where for each pair $\cat D\subseteq\cat C$ and $V\subseteq\bSpec\cat C$ one has
\[
\supp_\otimes(\cat D)\coloneqq\bigcup_{x\in\cat D}\supp_\otimes(x)\quad\text{and}\quad
  \tau_\otimes(V)\coloneqq\{x\in\cat C\mid\supp(x)\subseteq V\}\,.
  \]
The Balmer spectrum is related to the spectrum of the ring $R$ that acts on $\cat C$, via the \emph{comparison map}
\begin{equation}
\label{eq:bs-zs}
\bSpec\cat C\xrightarrow{\ \rho_{\cat C}\ } \Spec \End^*_{\cat C}(\one)\xrightarrow{\ \Spec\phi\ } \Spec R
\end{equation}
where $\rho_{\cat C}$ takes a prime ideal $\cat P\subseteq\cat C$ to the preimage of the unique maximal ideal of $\End^*_{\cat C/\cat P}(\one)$ under the canonical homomorphism
$\End^*_{\cat C}(\one)\to \End^*_{\cat C/\cat P}(\one)$; see \cite{Balmer:2010b}. We need a simple lemma.

\begin{lemma}\label{le:comparison}
For $r\in\End^*_\cat C(\one)$ we have $\rho^{-1}_\cat C(V(r))=\supp_\otimes(\kos{\one}{r})$.
\end{lemma}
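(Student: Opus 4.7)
The plan is to unpack the definition of the comparison map $\rho_\cat C$ from \cite{Balmer:2010b} and obtain the claim as a chain of equivalences; the statement is essentially a tautology once the construction is made explicit. First I would recall that for each $\cat P\in\bSpec\cat C$ the Verdier quotient $\cat C/\cat P$ is local in the tt-sense: the graded ring $\End^*_{\cat C/\cat P}(\one)$ admits a unique homogeneous maximal ideal, consisting precisely of the non-invertible homogeneous elements. By construction, $\rho_\cat C(\cat P)\subseteq \End^*_\cat C(\one)$ is the preimage of this maximal ideal under the canonical ring map $\End^*_\cat C(\one)\to\End^*_{\cat C/\cat P}(\one)$. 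In particular, a homogeneous $f\in\End^*_\cat C(\one)$ lies in $\rho_\cat C(\cat P)$ if and only if its image in $\End^*_{\cat C/\cat P}(\one)$ is non-invertible.

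The second ingredient is the elementary observation that for $f\in\End^*_\cat C(\one)$ of degree $d$, the cone $\kos{\one}{f}$ of $f\colon \one\to\Sigma^{d}\one$ vanishes in $\cat C/\cat P$ if and only if the image of $f$ is an isomorphism there. In other words, $\kos{\one}{f}\in\cat P$ precisely when the image of $f$ in $\End^*_{\cat C/\cat P}(\one)$ is invertible. Combining these two facts gives the desired equivalences:
\[
\cat P\in\rho_\cat C^{-1}(V(r))\ \iff\ r\in\rho_\cat C(\cat P)\ \iff\ \kos{\one}{r}\notin\cat P\ \iff\ \cat P\in\supp_\otimes(\kos{\one}{r}),
\]
which proves the lemma.

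There is no real obstacle here; the only substantive input is the standard fact that a morphism in a triangulated category is an isomorphism if and only if its cone is zero. Everything else is a direct translation between Balmer's definition of $\rho_\cat C$ and the definition of tensor-triangulated support.
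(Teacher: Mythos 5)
Your proof is correct and follows essentially the same route as the paper: unwinding Balmer's definition of $\rho_{\cat C}$ to see that $r\in\rho_{\cat C}(\cat P)$ exactly when $r$ becomes non-invertible in $\cat C/\cat P$, and noting that this is equivalent to the cone $\kos{\one}{r}$ not lying in $\cat P$. You spell out the intermediate steps (tt-locality of $\cat C/\cat P$, cone vanishing iff the map is an isomorphism) more explicitly than the paper does, but the argument is the same.
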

\begin{proof}
  Let $\cat P\in\bSpec\cat C$. Then $\cat
  P\in\supp_\otimes(\kos{\one}{r})$ if and only if the morphism
  $\one\xrightarrow{r}\Sigma^{|r|}\one$ is not invertible in $\cat C/\cat P$. This
  means $r\in\rho_\cat C(\cat P)$, so $\cat P\in \rho^{-1}_\cat C(V(r))$.
\end{proof}

The following result provides a comparison between cohomological and
tensor triangulated support.

\begin{proposition}
\label{pr:bs-lattices}
  Let $\cat C$ be an essentially small rigid tensor triangulated
  category. Then the comparison map \eqref{eq:bs-zs} induces a
  commutative diagram:
\begin{equation*}
  \begin{tikzcd}
    \{\text{Thomason subsets of }\Spec
    R\} \ar[r,"\tau_R"]\ar[d,"\rho^*_{\cat C}(\Spec\phi)^*"] &\Thick^{\otimes}(\cat C)\ar[d,equal]\\
    \{\text{Thomason subsets of }\bSpec\cat C\} \ar[r,"\tau_\otimes", "\sim" swap]&\Thick^{\otimes}(\cat C)
\end{tikzcd}
\end{equation*}
In particular, the comparison map\eqref{eq:bs-zs} is a homeomorphism if and only if the
  map \eqref{eq:tau} is an isomorphism.
\end{proposition}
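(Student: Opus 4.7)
The plan is to check commutativity of the diagram on the finite elements of the frame of Thomason subsets, and then deduce the homeomorphism assertion via Stone duality for spectral spaces. Every Thomason subset of $\Spec R$ is a directed join of subsets of the form $V(\fa)$ with $\fa$ finitely generated, and all three arrows $\tau_R$, $\tau_\otimes$, and $\rho_\cat C^*(\Spec\phi)^*$ preserve arbitrary joins. So it suffices to verify the identity $\tau_\otimes \rho_\cat C^*(\Spec\phi)^*(V(\fa)) = \tau_R(V(\fa))$ for each finitely generated ideal $\fa \subseteq R$. By Lemma \ref{le:koszul} the right-hand side equals $\thick^\otimes(\kos{\one}{\fa})$.

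For the left-hand side, write $\fa = (r_1,\ldots,r_n)$. Then $(\Spec\phi)^{-1}(V(\fa)) = V(\phi(\fa)) = \bigcap_i V(\phi(r_i))$ inside $\Spec \End^*_{\cat C}(\one)$. Combining Lemma \ref{le:comparison} applied to each $\phi(r_i)$ with the tensor-multiplicativity $\supp_\otimes(x\otimes y) = \supp_\otimes(x) \cap \supp_\otimes(y)$ and the factorisation $\kos{\one}{\fa} \simeq \kos{\one}{r_1} \otimes \cdots \otimes \kos{\one}{r_n}$ (which follows iteratively from $\kos{x}{r} = \kos{\one}{r}\otimes x$), one obtains
\[
\rho_\cat C^{-1}(\Spec\phi)^{-1}(V(\fa)) = \bigcap_{i=1}^n \supp_\otimes(\kos{\one}{r_i}) = \supp_\otimes(\kos{\one}{\fa})\,.
\]
Applying $\tau_\otimes$ and using the Balmer identity $\tau_\otimes(\supp_\otimes(x)) = \thick^\otimes(x)$ yields $\thick^\otimes(\kos{\one}{\fa})$, which matches $\tau_R(V(\fa))$ and completes the commutativity check.

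For the ``in particular'' claim, $\tau_\otimes$ is an isomorphism by Balmer's classification, so the commutativity shows that $\tau_R$ is an isomorphism if and only if $\rho_\cat C^*(\Spec\phi)^*$ is an isomorphism of frames of Thomason subsets. By Hochster/Stone duality a spectral map between spectral spaces is a homeomorphism precisely when it induces such an isomorphism; applied to the comparison map \eqref{eq:bs-zs} this gives the equivalence. The main technical point is upgrading Lemma \ref{le:comparison} from a single element to a finitely generated ideal through the tensor-multiplicativity of $\supp_\otimes$ and the iterated factorisation of $\kos{\one}{\fa}$; the rest is frame-theoretic bookkeeping together with a standard invocation of Hochster duality.
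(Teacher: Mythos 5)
Your proof is correct and follows essentially the same route as the paper's: reduce to finite elements $V(\fa)$ using that all maps preserve joins, identify both sides with $\thick^\otimes(\kos{\one}{\fa})$ via Lemmas~\ref{le:koszul} and~\ref{le:comparison}, and then invoke sobriety (Stone duality for spectral spaces) for the homeomorphism equivalence. The only difference is presentational: you spell out the passage from a single element $r$ to a finitely generated ideal $\fa$ via tensor-multiplicativity of $\supp_\otimes$ and the factorisation $\kos{\one}{\fa}\simeq\bigotimes_i\kos{\one}{r_i}$, a step the paper leaves implicit.
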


\begin{proof}
The map $\rho^*_{\cat C}$ is given by taking preimages along
$\rho_{\cat C}$, and $(\Spec\phi)^*$ is defined analogously.  We need to check that these maps are well-defined and that they make the diagram commutative. First observe that for any $r\in R$ the preimage of $V(r)$ under $\Spec\phi$ equals   $V(\phi(r))$, and its preimage  under $\rho_{\cat C}$ equals $\supp_\otimes(\kos{\one}{r})$ by Lemma~\ref{le:comparison}. Thus
  \[
  (\rho^*_{\cat C}(\Spec\phi)^*)(V(\fa))=\supp_\otimes(\kos{\one}{\fa})
  \] 
for any finitely generated ideal $\fa$ of $R$. Using Lemma~\ref{le:koszul} this yields the commutativity, keeping in mind that all maps in the diagram preserve arbitrary joins.  The diagram implies that $\tau_R$ is a bijection if and only if $\rho^*_{\cat C}(\Spec\phi)^*$ is a bijection. The spaces in question are sober, so are determined up to a homeomorphism by their frames of open subsets \cite{Johnstone:1982a}. Thus $\rho^*_{\cat C}(\Spec\phi)^*$ is a bijection if and only if $(\Spec\phi)\rho_{\cat C}$ is a homeomorphism.
\end{proof}

\begin{remark}
Suppose that the cohomological support is finite. Then the support map
$\supp_R$ from \eqref{eq:class-tt} is a bijection if and only if the comparison map \eqref{eq:bs-zs} is a homeomorphism. This follows from the adjointness of the pair $(\supp_R,\tau_R)$.
\end{remark}

\subsection*{Change of categories}
In Section~\ref{se:eab} we have to deal with changes of categories. In that context, the tensor structure on categories turns out to be not essential since all thick subcategories are tensor ideals. So it is convenient not to have to worry whether functors are monoidal. 

Consider an essentially small tensor triangulated category $\cat C$ with central $R$-action and the lattice $\Thick(\cat C)$ of all thick subcategories, rather than tensor ideal thick subcategories of $\cat C$. This need not be a frame. The cohomological support function $\supp_R$ is defined just as above and gives a map from $\Thick(\cat C)$ to the Thomason subsets of $\Spec R$ when $\supp_R$ is finite. 

\begin{lemma}
\label{le:hopkins-cat}
Let $f\colon\cat C\to\cat D$ and $g\colon\cat D\to\cat C$ be an adjoint pair of $R$-linear exact functors between $R$-linear triangulated categories. Suppose that $\thick(gf(x))=\thick(x)$ for all $x\in\cat C$.  If taking support induces a lattice isomorphism
\[  
\Thick(\cat D)\xrightarrow{\ \supp_R\ } \{\text{Thomason subsets of }\Spec R\}\,,
\]
then  taking support induces a lattice isomorphism
\[  \Thick(\cat C)\xrightarrow{\ \supp_R\ } \{\text{Thomason subsets of }\Spec R\}\,.
\]
\end{lemma}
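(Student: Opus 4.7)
The plan is to verify for $\cat C$ the three conditions of Lemma~\ref{le:hopkins}, in the form of the non-tensor analogue flagged in Remark~\ref{re:hopkins}, using that they hold on $\cat D$ together with the properties of the adjoint pair $(f,g)$.

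The heart of the argument is the equality $\supp_R(x)=\supp_R(fx)$ for every $x\in\cat C$. One direction rests on the general principle that for any $R$-linear exact functor $h$ one has $\supp_R(h(x))\subseteq\supp_R(x)$: the induced map of endomorphism rings sends $1_x$ to $1_{h(x)}$, so any $r\in R$ which annihilates $1_x$ also annihilates $1_{h(x)}$, forcing $\End^*(h(x))_\fp$ to vanish whenever $\End^*(x)_\fp$ does. Applied with $h=f$, this gives $\supp_R(fx)\subseteq\supp_R(x)$. The reverse inclusion uses the hypothesis: since $\thick(x)=\thick(gf(x))$, the objects $x$ and $gf(x)$ have equal cohomological supports (support is constant on any object generating a given thick subcategory), and applying the principle again with $h=g$ gives $\supp_R(gf(x))\subseteq\supp_R(fx)$.

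Granted this, condition (1) (finiteness) transfers directly, since $\supp_R(x)=\supp_R(fx)=V(\fa)$ for some finitely generated $\fa$ by the hypothesis on $\cat D$. Condition (3) (detection) is a two-step chase: if $\supp_R(y)\subseteq\supp_R(x)$ then $\supp_R(fy)\subseteq\supp_R(fx)$, so by the condition on $\cat D$ we have $fy\in\thick(fx)$; applying the exact functor $g$ yields $gf(y)\in\thick(gf(x))$, and collapsing the $gf$'s to identities on thick subcategories via the hypothesis gives $y\in\thick(gf(y))\subseteq\thick(gf(x))=\thick(x)$.

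The main obstacle I foresee is condition (2), namely $\supp_R(\cat C)=\Spec R$, since the adjoint $g$ only guarantees $\supp_R(g(z))\subseteq\supp_R(z)$ and in general not equality. The plan is to first observe that $\cat C=\thick(g(\cat D))$, which follows from the chain $\cat C=\thick(gf(\cat C))\subseteq\thick(g(\cat D))\subseteq\cat C$, and then combine this with the coverage and the classification inherited on $\cat D$ to extract enough $\cat C$-objects to exhaust $\Spec R$. Once all three conditions are established, the non-tensor analogue of Lemma~\ref{le:hopkins} produces the desired lattice isomorphism for $\cat C$.
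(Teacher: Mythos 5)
Your overall strategy matches the paper's: verify, for $\cat C$, conditions (1)--(3) of the non-tensor analogue of Lemma~\ref{le:hopkins} by transport along $f$, and the engine for conditions (1) and (3) is the equality $\supp_R(f(x))=\supp_R(x)$ for $x\in\cat C$. Your route to this equality is different from the paper's and worth noting. The paper uses the adjunction isomorphism $\End^*_{\cat D}(f(x))\cong\Hom^*_{\cat C}(x,gf(x))$ together with $\thick(gf(x))=\thick(x)$. You instead observe that any $R$-linear exact functor $h$ only shrinks cohomological support (since $s\cdot 1_x=0$ for $s\notin\fp$ forces $s\cdot 1_{h(x)}=0$), and then you chain $\supp_R(f(x))\subseteq\supp_R(x)=\supp_R(gf(x))\subseteq\supp_R(f(x))$, the middle equality coming from the thick hypothesis. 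This is correct and has a small advantage: it never actually uses the adjunction in that step, only $R$-linearity and exactness of $f,g$ and the thick equality, so it is a bit more elementary and modular. Your chain for condition (3) is then the same four-implication chase the paper uses.

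Where you and the paper part ways is condition (2), and you are right to flag it as the delicate point. Your observation $\cat C=\thick(g(\cat D))$ is correct, but the plan you sketch does not close the gap: from it one only gets $\supp_R(\cat C)=\bigcup_z\supp_R(g(z))\subseteq\bigcup_z\supp_R(z)=\Spec R$, an inclusion in the wrong direction, exactly as you yourself worry. Note that the first observation $\supp_R(f(x))=\supp_R(x)$ likewise gives only $\supp_R(\cat C)=\bigcup_x\supp_R(f(x))\subseteq\Spec R$ and nothing more, and the paper's proof simply asserts that (2) "follows from our first observation" without elaboration. In the paper's two actual applications the reverse inclusion holds because the image of $f$ generates $\cat D$ (for $f=B\otimes_A-$ one has $\thick(f(\gam_\fp A))=\thick(\gam_\fp B)=\cat D$; for the Morita pair $f$ is an equivalence), so $\supp_R(\cat D)\subseteq\bigcup_x\supp_R(f(x))=\supp_R(\cat C)$. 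If you want to make your proof airtight, state and use that extra hypothesis (equivalently: $\thick(f(\cat C))=\cat D$, or: $g$ is faithful in the relevant sense); without it, the degenerate case $\cat C=0$, $\cat D\neq 0$, $f=g=0$ satisfies all the stated hypotheses and shows (2) cannot be deduced. So: your key step is right and in a genuinely different way, your (1) and (3) are right, and your (2) is incomplete---but that incompleteness is shared with the paper's own terse assertion, and is repaired by the extra property present in the applications.
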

\begin{proof}
First observe that $\supp_R(f(x))=\supp_R(x)$ for each $x\in\cat C$, since there is an $R$-linear isomorphism
\[
\End^*_{\cat D}(f(x))\cong\Hom^*_{\cat C}(x,gf(x))
\]
and $\thick(gf(x))=\thick(x)$. Now we check the conditions in Lemma~\ref{le:hopkins} for $\cat C$, keeping in mind Remark~\ref{re:hopkins}. 
Conditions (1) and (2) follow from our first observation, because they hold for $\cat D$. For (3) we have
\begin{align*}
    \supp_R(x)\subseteq\supp_R(y)&\implies  \supp_R(f(x))\subseteq\supp_R(f(y))\\
    &\implies \thick(f(x))\subseteq\thick(f(y))\\
    &\implies \thick(gf(x))\subseteq\thick(gf(y))\\
    &\implies \thick(x)\subseteq\thick(y)
\end{align*}
and this finishes the proof.
\end{proof}

\section{Stable module category}
\label{se:stmod}
In this section we recall a characterisation of local dualisable objects in the stable module category of a finite group, from  \cite{Benson/Iyengar/Krause/Pevtsova:2024c}.

Let $G$ be a finite group and  $k$ a field of positive characteristic $p$ dividing $|G|$. In the rest of this manuscript the focus is on the stable category of (all) $kG$-modules
\[
\cat T_G\coloneqq \StMod kG\,.
\]
This is a rigidly compactly generated tensor triangulated category, with product $-\otimes_k-$ with diagonal $G$-action, and unit $k$. Let $H^*(G,k)$ denote the cohomology algebra of $G$. There is a natural map of graded $k$-algebras $H^*(G,k) \to \End^*_{\cat T_G}(k)$, which provides a canonical $H^*(G,k)$-action on $\cat T_G$.

Fix $\fp$ in $\Proj H^*(G,k)$ and let  $\gam_\fp \colon \cat T_G\to \cat T_G$ be the idempotent functor corresponding to $\fp$; see, for instance, \cite[Section~5]{Benson/Iyengar/Krause:2008a}. Its image $\gam_\fp\cat T_G$ is the full subcategory of $\cat T_G$ consisting of $\fp$-local and $\fp$-torsion objects, and that is itself a compactly generated tensor triangulated category with product inherited from $\cat T_G$ and unit  $\gam_{\fp}k$. We also know the graded endomorphism ring of the unit: The natural map $H^*(G,k)\to \End^*_{\cat T_G}(\gam_\fp k)$ induces an isomorphism
\begin{equation}
\label{eq:benson-con}
 H^*(G,k)\phat \longiso \End^*_{\cat T_G}(\gam_\fp k)\,.
\end{equation}
This is proved by Benson and Greenlees~\cite[Theorem~2.6]{Benson/Greenlees:2008a}; see also \cite[Theorem~2]{Benson:2008a}. The following result is extracted from \cite[Theorem~1.1]{Benson/Iyengar/Krause/Pevtsova:2024c}.

\begin{theorem}
\label{th:regular-paper}
Fix $\fp$ in $\Proj H^*(G,k)$. There are equalities
\[
\duals{(\gam_{\fp}\cat T_G)} = \thick(\{\gam_{\fp}M\mid M\in\stmod kG\}) = \thick(\gam_{\fp}C)\,,
\]
where $C$ is a generator for the triangulated category $\stmod kG$. \qed
\end{theorem}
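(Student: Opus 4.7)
The second equality, $\thick(\{\gam_\fp M\mid M\in\stmod kG\})=\thick(\gam_\fp C)$, is immediate: since $C$ generates $\stmod kG$ as a thick triangulated category, exactness of $\gam_\fp$ gives $\gam_\fp M\in \thick(\gam_\fp C)$ for every $M\in\stmod kG$, and $\gam_\fp C$ itself is of the form $\gam_\fp M$. The thick closures therefore coincide.

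The inclusion $\thick(\{\gam_\fp M\})\subseteq\duals{(\gam_\fp\cat T_G)}$ is the easy direction of the first equality. The functor $\gam_\fp\colon \cat T_G\to \gam_\fp\cat T_G$ is strong monoidal, sending the unit $k$ to $\gam_\fp k$, and dualisability is a finite diagrammatic property preserved by any such functor. Since $\cat T_G$ is rigidly compactly generated, every $M\in\stmod kG$ is dualisable in $\cat T_G$, hence $\gam_\fp M$ is dualisable in $\gam_\fp\cat T_G$; thickness of $\duals{(\gam_\fp\cat T_G)}$ closes the inclusion.

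The substance lies in the reverse inclusion $\duals{(\gam_\fp\cat T_G)}\subseteq \thick(\gam_\fp C)$. Here the standard ``factor the identity through a finite colimit stage'' argument from the theory of compactly generated tensor triangulated categories does not apply directly, because the unit $\gam_\fp k$ is typically not compact in $\gam_\fp\cat T_G$, so dualisability of $X$ does not force $X$ itself to be compact. My plan is to exploit the local structure of $\gam_\fp\cat T_G$: by \eqref{eq:benson-con} its unit has complete local graded endomorphism ring $H^*(G,k)\phat$, and the category should behave like a completion of $\thick(\gam_\fp C)$ at the augmentation ideal. Given a dualisable $X$, I would first show that $\End^*_{\gam_\fp\cat T_G}(X)$ is a finitely generated module over $H^*(G,k)\phat$, and then build $X$ explicitly from $\gam_\fp C$ through a finite sequence of cones and retracts controlled by a Koszul-object approximation with respect to the maximal ideal of $H^*(G,k)\phat$; the Koszul machinery developed in Section~\ref{se:lattices}, in particular Lemma~\ref{le:koszul}, is tailored for this.

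The main obstacle is converting dualisability into the finiteness needed for the Koszul approximation to terminate in finitely many steps. For a regular complete local ring the analogous statement rests on classical structure theorems for finite-length modules; in the present setting it should reduce, via the Quillen-type stratification anticipated in Theorem~\ref{th:stratification}, to the elementary abelian $p$-group case, which becomes a problem in graded-commutative algebra over a localised polynomial ring that is addressed in \cite{Benson/Iyengar/Krause/Pevtsova:2024b}. Once this finiteness is in hand, the explicit construction of $X$ from $\gam_\fp C$ is a bookkeeping exercise analogous to resolving a finitely generated module over a regular local ring.
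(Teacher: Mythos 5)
The paper itself does not prove this statement; it cites it directly from \cite[Theorem~1.1]{Benson/Iyengar/Krause/Pevtsova:2024c}, which is why the theorem is stated with an immediate $\qed$. Your treatment of the two easy directions is fine: the second equality follows from exactness of $\gam_\fp$ together with $\thick(C)=\stmod kG$, and the inclusion $\thick(\{\gam_\fp M\})\subseteq\duals{(\gam_\fp\cat T_G)}$ follows because $\gam_\fp\colon\cat T_G\to\gam_\fp\cat T_G$ is monoidal (indeed $\gam_\fp X\cong\gam_\fp k\otimes X$ and $\gam_\fp k$ is a tensor idempotent) and monoidal functors preserve dualisability.

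The reverse inclusion $\duals{(\gam_\fp\cat T_G)}\subseteq\thick(\gam_\fp C)$ is, however, left entirely as a plan, and the plan has genuine gaps. First, you say you ``would first show'' that $\End^*_{\gam_\fp\cat T_G}(X)$ is finitely generated over $H^*(G,k)\phat$ for $X$ dualisable; you give no argument, and in fact the only relevant finiteness statement in the present paper (Corollary~\ref{cor:noeth}) is deduced \emph{from} Theorem~\ref{th:regular-paper}, so it cannot be used here without circularity. In the cited paper the corresponding finiteness (artinianness of $\Hom^*(k,X)$ for dualisable $X$) is a nontrivial local duality theorem. Second, and more seriously, you assert that once finiteness is available, producing $X$ from $\gam_\fp C$ by cones and retracts is ``a bookkeeping exercise analogous to resolving a finitely generated module over a regular local ring.'' That analogy is exactly the content of the theorem being proved (the companion paper's title is ``local regularity'' for this reason); it is not a routine step. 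Your invocation of Lemma~\ref{le:koszul} is also misplaced: that lemma identifies the thick tensor ideal $\cat C_{V(\fa)}$ with $\thick^\otimes(\kos{\one}{\fa})$, which is a statement about supports, not a device for expressing an arbitrary dualisable object as a finite extension of copies of $\gam_\fp C$. Finally, the reduction to elementary abelian subgroups via Quillen stratification that you sketch is how the present paper proves the spectrum computation (Theorem~\ref{th:spec-G}), not how \cite{Benson/Iyengar/Krause/Pevtsova:2024c} establishes Theorem~\ref{th:regular-paper}; and Theorem~\ref{th:stratification}, which you cite, is itself derived later and partly rests on the material here. In short, the hard direction remains unproved.
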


The result below implies that cohomological support, with respect to $R$, is finite on the  category of local dualisable objects, so the results of Section~\ref{se:lattices} apply to it.

\begin{corollary}
\label{cor:noeth} 
Fix $\fp$ in $\Proj H^*(G,k)$ and set $\cat C=\duals{(\gam_\fp\cat T_G)}$. Then the $k$-algebra  $R=\End^*_{\cat C}(\gam_\fp k)$ is noetherian and  for all $M,N$ in $\cat C$ the $R$-module $\Hom^*_{\cat C}(M,N)$ is finite. In particular, cohomological support, $\supp_R$, is finite on $\cat C$. 
\end{corollary}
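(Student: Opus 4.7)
The argument splits into three stages that combine Theorem~\ref{th:regular-paper} (which supplies a compact generator $\gam_\fp C$ for $\cat C$) with the isomorphism \eqref{eq:benson-con} and classical finite generation in group cohomology. First, $R$ is noetherian: by the Evens--Venkov theorem, $H^*(G,k)$ is a finitely generated graded-commutative $k$-algebra, hence noetherian, and completion at a homogeneous prime preserves noetherianity; combined with \eqref{eq:benson-con} this gives $R\cong H^*(G,k)\phat$ noetherian.

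Next I would address finite generation of $\Hom^*_{\cat C}(M,N)$ over $R$ by reducing to the generating class $\{\gam_\fp M\mid M\in\stmod kG\}$ of Theorem~\ref{th:regular-paper}. For compact $M,N\in\stmod kG$, the central input is a natural $R$-linear isomorphism
\[
\Hom^*_{\cat T_G}(\gam_\fp M,\gam_\fp N)\;\cong\; \Hom^*_{\cat T_G}(M,N)\phat_\fp,
\]
a module-level extension of \eqref{eq:benson-con} that follows from the description of $\gam_\fp$ as completion-after-localisation applied to the graded $H^*(G,k)$-module $\Hom^*_{\cat T_G}(M,N)$. Since the latter module is finitely generated over $H^*(G,k)$ by the Evens finite-generation theorem, its localised completion is finitely generated over the noetherian ring $R$. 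A standard two-variable thick subcategory argument then propagates finite generation to all of $\cat C$: fixing the generator $\gam_\fp C$, the subclass of $N\in\cat C$ for which $\Hom^*_{\cat C}(\gam_\fp C,N)$ is finitely generated over $R$ is thick (finite generation over a noetherian ring is stable under kernels, cokernels, and summands), contains $\gam_\fp C$, and therefore equals $\cat C$; iterating on the first variable finishes the argument. Finiteness of $\supp_R$ is then immediate: for $x\in\cat C$, the annihilator $\fa\coloneqq\ann_R\End^*_{\cat C}(x)$ is a finitely generated ideal of the noetherian ring $R$, and $\supp_R(x)=V(\fa)$.

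The main obstacle I anticipate is verifying the displayed Hom-level identification: the rest is essentially formal, but pinning down that $\gam_\fp$ acts on Hom groups between compacts by completed localisation — and that this matches \eqref{eq:benson-con} when $M=N=k$ — requires invoking the structure theorem of Benson--Iyengar--Krause for $\gam_\fp$ together with the Benson--Greenlees local duality arguments already implicit in \eqref{eq:benson-con}. Once this single computation is in hand, the remainder is a purely formal thick-subcategory and noetherianity game.
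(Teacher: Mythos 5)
Your proposal is correct in its broad outline but takes a genuinely different route from the paper. The paper's proof computes $\Hom^*_{\cat T_G}(M,\gam_\fp C)$ for dualisable $M$ via local duality (\cite[Theorem~5.1]{Benson/Iyengar/Krause/Pevtsova:2019a}), invokes the artinianity of $\Hom^*_{\cat T_G}(k,-)$ on dualisable objects (\cite[Theorem~8.5]{Benson/Iyengar/Krause/Pevtsova:2024c}), and concludes by Matlis duality. You instead aim for the direct Hom-level formula $\Hom^*_{\cat T_G}(\gam_\fp M,\gam_\fp N)\cong\Hom^*_{\cat T_G}(M,N)\phat_\fp$ for compacts $M,N$, followed by a two-variable thick-subcategory propagation; this bypasses the artinianity input and is conceptually more direct. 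The formula is correct and follows from the smashing property of $\gam_\fp$, the Greenlees--May adjunction $\Gamma_{V(\fp)}\dashv\Lambda^{V(\fp)}$, and the agreement of derived with ordinary $\fp$-completion on finitely generated modules. Two imprecisions should be repaired: first, $\gam_\fp$ is \emph{not} ``completion-after-localisation''---it is local cohomology at $\fp$ (torsion-after-localisation); the completion appears on Hom groups only through the nontrivial $\Gamma\dashv\Lambda$ adjunction, which is exactly the substance you rightly flag as the obstacle. Second, $\Hom^*_{\cat T_G}(M,N)$ is \emph{Tate} Ext, which is not finitely generated over $H^*(G,k)$; Evens' theorem applies to ordinary $\Ext^*$, and one must first localise at the non-maximal prime $\fp$ (where Tate and ordinary Ext agree, the difference being $\fm$-torsion) before invoking finite generation. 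With those two points tightened, your route is sound and each approach buys something: the paper's leans on results already proved in the companion paper, while yours gives a self-contained completion formula that is of independent interest.
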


\begin{proof}
    Set $S=H^*(G,k)$. Since this ring is noetherian, and localisations and completions preserve the noetherian property, it follows from \eqref{eq:benson-con} that the ring $R$ is noetherian. With $C$ as in Theorem~\ref{th:regular-paper}, and $C^\vee=\Hom_k(C,k)$, one has isomorphisms of $R$-modules
    \begin{align*}
    \Hom^*_{\cat T_G}(M,\gam_\fp C)
        &\cong \Hom^*_{\cat T_G}(M,C\otimes_k \gam_\fp k)\\
        &\cong \Hom^*_{\cat T_G}(M\otimes_k C^\vee, \gam_\fp k)\\     
        &\cong \Hom^*_S(\Hom^*_{\cat T_G}(k,\Omega^{d}(M\otimes_k C^\vee)), I(\fp)) \\
        &\cong \Hom^*_{S}(\Hom^*_{\cat T_G}(k,M\otimes_k\Omega^{d}(C^\vee)), I(\fp)) \\
        &\cong \Hom^*_R(\Hom^*_{\cat T_G}(k,M\otimes_k \Omega^{d}(C^\vee)), I(\fp))
    \end{align*}
    Here $I(\fp)$ is the injective hull of the $S$-module $S/\fp$ and $d$ is the Krull dimension of $S/\fp$. The third isomorphism is by  \cite[Theorem~5.1]{Benson/Iyengar/Krause/Pevtsova:2019a}; see also \cite[Theorem~2.6]{Benson/Greenlees:2008a}. The last isomorphism holds because the modules $M\otimes_k \Omega^{d}(C^\vee)$ and $I(\fp)$ are $\fp$-local and $\fp$-torsion. The remaining isomorphisms are standard. Since $M$ is dualisable in  $\gam_\fp\cat T_G$ and the module $C$, and hence also $\Omega^{d}(C^\vee)$, is compact in $\cat T_G$, the module $M\otimes_k \Omega^{d}(C^\vee)$ is dualisable in $\gam_\fp\cat T_G$. Thus the $R$-module $\Hom^*_{\cat T_G}(k,M\otimes_k\Omega^{d}(C^\vee))$ is artinian, by \cite[Theorem~8.5]{Benson/Iyengar/Krause/Pevtsova:2024c}. Using Matlis duality and the isomorphisms above we deduce that $\Hom^*_{\cat T_G}(M,\gam_\fp C)$, and, hence, $\Hom^*_{\cat C}(M,\gam_\fp C)$ is noetherian.  Since $N$ is in $\thick(\gam_{\fp}C)$, again by Theorem~\ref{th:regular-paper}, we deduce that the $R$-module $\Hom^*_{\cat C}(M,N)$ is noetherian; equivalently, finitely generated.
\end{proof}
 
Theorem~\ref{th:regular-paper} also allows us to translate the problem of classifying the tensor ideal thick categories of $\duals{(\gam_{\fp}\cat T_G)}$ to the corresponding classification problem for $\thick(\gam_\fp C)$. There is one further change in perspective that is helpful.

\subsection*{The homotopy category of injective modules}
Let $\KInj{kG}$ be the homotopy category of complexes of injective $kG$-modules; this too is a rigidly compactly generated tensor triangulated category, with product $-\otimes_k-$ with diagonal $G$-action and unit $\bfi k$, the injective resolution of the trivial $kG$-module $k$. The graded endomorphism ring of $\bfi k$ is precisely $H^*(G,k)$.  The subcategory of compact, equivalently, dualisable, objects in $\KInj{kG}$ naturally identifies with $\dbcat{kG}$, the bounded derived category of finite generated $kG$-modules. This equivalence assigns a complex $M\in \dbcat{kG}$ to $\bfi M$.

The stable module category, $\cat T_G$,  can be identified as a tensor triangulated category with the full subcategory of acyclic complexes in $\KInj{kG}$, and one has a recollement
\[
\begin{tikzcd}[column sep = huge]
    \cat T_G = \StMod{kG} \arrow[tail]{r} 
    	&\KInj{kG} \arrow[twoheadrightarrow,swap,yshift=1.5ex]{l}    \arrow[twoheadrightarrow,yshift=-1.5ex]{l}
		    \arrow[twoheadrightarrow]{r} 
        &\dcat{\Mod kG}\arrow[tail,swap,yshift=1.5ex]{l}\arrow[tail,yshift=-1.5ex]{l}
  \end{tikzcd}
\]
For each $\fp$ in $\Proj H^*(G,k)$  applying $\gam_\fp$ to the inclusion $\cat T_G\subset \KInj{kG}$ yields an equivalence of tensor triangulated categories:
\begin{equation*}
\label{eq:stmod-kinj}
\gam_{\fp}\cat T_G\longiso\gam_{\fp}\KInj{kG}\,.
\end{equation*}
See, for instance, \cite[Section~8]{Benson/Iyengar/Krause/Pevtsova:2024c}. Thus Theorem~\ref{th:regular-paper} translates to an equality
\begin{equation*}
\duals{(\gam_{\fp}\KInj{kG})} = \thick(\{\gam_{\fp}(\bfi M)\mid M\in\mod kG\})\,.    
\end{equation*}
Thus the classification problem that we have set out to solve can be recast in terms of local dualisable objects in $\KInj{kG}$. This is important in Section~\ref{se:eab}. 

Having made this transition, a natural question is to compute the spectrum of local dualisable objects in $\KInj{kG}$ also at  $\fm=H^{\geqslant 1}(G,k)$, the unique closed point of $\Spec H^*(G,k)$; this does not arise for the stable module category because $\gam_\fm \cat T_G = \{0\}$.  The recollement above induces an equivalence 
\[
\gam_\fm \KInj{kG}\longiso \dcat{\Mod kG}
\]
of tensor triangulated categories, leading to the equivalence
\[
\duals{(\gam_{\fm}\KInj{kG})} \longiso \dbcat{kG}\,.
\]
Keep in mind that the tensor structure on $\dcat{\Mod kG}$ is $-\otimes_k-$ with diagonal $G$-action. The main result of \cite{Benson/Carlson/Rickard:1997a} computes the spectrum of the category on the right, yielding homeomorphisms:
\begin{equation*}
\label{eq:spec-maps}
\bSpec \duals{(\gam_\fm\KInj{kG})} \xrightarrow{\ \rho_G\ } \Spec \End^*(k) \longiso \Spec H^*(G,k)\,.
\end{equation*}
Theorem~\ref{th:spec-G} below is an extension of this result to all primes $\fp$ in $\Spec H^*(G,k)$.

\section{Elementary abelian groups} 
\label{se:eab}
Let $p$ be a prime number, $k$ a field of characteristic $p$, and $E=(\bbZ/p)^r$ the elementary abelian $p$-group of rank $r$. Fix  $\fp$ in $\Proj H^*(E,k)$ and set
\[
\cat C\coloneqq \duals{(\gam_\fp\StMod kE)}\quad\text{and} \quad
    R\coloneqq \End^*_{\cat C}(\gam_\fp k)\cong H^*(E,k) \phat\,.
    \]
As noted in the last section $\cat C$ is an essentially small tensor-triangulated category, with product inherited from $\StMod kE$ and unit $\gam_\fp k$. The isomorphism is from \eqref{eq:benson-con}. The result below computes the spectrum of $\cat C$.

\begin{theorem}
\label{th:spec-E}
Taking cohomological support induces a lattice isomorphism
\[
   \Thick^{\otimes}(\cat C) \xrightarrow{\ \supp_R \ } \{\text{Thomason subsets of }\Spec R \} \,.
\]
\end{theorem}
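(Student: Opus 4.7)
The plan is to verify the three conditions of Lemma~\ref{le:hopkins} for $\cat C$ and the noetherian graded ring $R = H^*(E,k)\phat$. Condition~(1), finiteness of cohomological support, is immediate from Corollary~\ref{cor:noeth}. For Condition~(2), note that $\End^*_{\cat C}(\gam_\fp k) = R$, so $\supp_R(\gam_\fp k) = \Spec R$; hence for any finitely generated homogeneous ideal $\fa$ of $R$ the Koszul object $\kos{\gam_\fp k}{\fa}$ lies in $\cat C$ and satisfies $\supp_R(\kos{\gam_\fp k}{\fa}) = V(\fa)$, which already gives $\supp_R(\cat C) = \Spec R$.

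The heart of the argument, and the expected main obstacle, is Condition~(3): if $M, N \in \cat C$ satisfy $\supp_R(N) \subseteq \supp_R(M)$ then $N \in \thick^\otimes(M)$. The plan is to transfer this to a commutative setting, following the strategy of \cite{Benson/Iyengar/Krause:2011b}. After identifying $\cat C$ with $\duals{(\gam_\fp \KInj{kE})}$ via the recollement recalled in Section~\ref{se:stmod}, Koszul duality sets up an adjoint pair between $\KInj{kE}$ and a derived category attached to the polynomial ring $S$ modelling the (reduced) cohomology of $E$; for $p=2$ this is the standard BGG correspondence with the exterior algebra $kE$, and for odd $p$ one first restricts to the generating exterior subalgebra. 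Localising and completing at the prime $\fq \subset S$ corresponding to $\fp$ and restricting to local dualisable objects yields an adjoint pair of $R$-linear exact functors between $\cat C$ and $\cat C' \coloneqq \duals{(\gam_\fq \dcat{S})}$ (or its Koszul DGA analogue), with matching endomorphism ring of the unit, $R \cong S_\fq\phat$.

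The classification on $\cat C'$ is supplied by Theorem~\ref{th:dga}, the main result of \cite{Benson/Iyengar/Krause/Pevtsova:2024b}: the thick subcategories of $\cat C'$ correspond bijectively to Thomason subsets of $\Spec R$. To pull this classification back to $\cat C$, the plan is to apply Lemma~\ref{le:hopkins-cat} to the Koszul adjunction; the non-tensor form is enough because, by Theorem~\ref{th:regular-paper} with generator $C = k$ of $\stmod{kE}$, the unit $\gam_\fp k$ already generates $\cat C$ as a thick subcategory, and consequently every thick subcategory of $\cat C$ is automatically a thick tensor ideal. The delicate point will be verifying the hypothesis $\thick(gf(x)) = \thick(x)$ of Lemma~\ref{le:hopkins-cat} in the local-complete setting; this should follow from the fact that the Koszul functors, restricted to the local dualisable hearts on both sides, are essentially inverse up to thick closure, which in turn rests on the matching of endomorphism rings of the units and the generation statement of Theorem~\ref{th:regular-paper}.
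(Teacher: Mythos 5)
Your overall route matches the paper's: reduce to checking the conditions of Lemma~\ref{le:hopkins} (you handle (1) and (2) correctly), note that $\gam_\fp k$ generates $\cat C$ so every thick subcategory is automatically a tensor ideal, pass through the BGG correspondence to a formal dg algebra where Theorem~\ref{th:dga} applies, and pull the classification back via Lemma~\ref{le:hopkins-cat}. That is exactly the architecture of the proof.

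The gap is in the step you flag as delicate, and your proposed resolution of it does not work. You assert that $\thick(gf(x))=\thick(x)$ ``should follow from \ldots the matching of endomorphism rings of the units and the generation statement of Theorem~\ref{th:regular-paper}.'' Neither of these abstract facts implies the needed identity. Having $\End^*(\gam_\fp k)\cong S\phat$ on both sides and $\cat C=\thick(\gam_\fp k)$ is compatible with $gf$ killing objects or collapsing thick subcategories: for a trivial counterexample, take $f=g=0$ between two copies of $\thick(k)\subset\dcat{k}$---the unit endomorphism rings agree, the unit generates, but $\thick(gf(k))=0$. What actually makes the identity hold is very specific to the situation: for $A=kE$ one has $gf(X)=B\otimes_A X$ where $B$ is the Koszul complex on the (nilpotent) maximal ideal of $A$. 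Since $\fm_A$ is nilpotent, $\thick(B)=\thick(A)$ in $\mathbf K^{\mathrm b}(\mathrm{proj}\,A)$ by \cite[Lemma~6.0.9]{Hovey/Palmieri/Strickland:1997a}, and applying the action $-\otimes_A X$ of $\mathbf K^{\mathrm b}(\mathrm{proj}\,A)$ on $\KInj{A}$ (in the sense of Stevenson) then gives $\thick(B\otimes_AX)=\thick(X)$. This is the content of Lemma~\ref{le:AB} in the paper, and it is proved at the level of $\KInj{A}$ and $\KInj{B}$ \emph{before} applying $\gam_\fp$; the local--complete form you want is obtained afterwards by exactness and $S$-linearity of the functors. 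Without some argument of this nature---using the concrete identification $gf=B\otimes_A-$ rather than formal properties of the units---the reduction to Theorem~\ref{th:dga} is not justified for odd $p$, which is precisely where $f,g$ fail to be equivalences.
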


The proof takes some preparation. A key ingredient is the Bernstein--Bernstein--Gelfand (BGG) correspondence, as formulated in \cite{Benson/Iyengar/Krause:2011b}---see also the proof of \cite[Theorem~8.2]{Benson/Iyengar/Krause/Pevtsova:2024a}. We begin by recalling the various categories that appear in it, keeping the notation from \cite{Benson/Iyengar/Krause/Pevtsova:2024a}.

 Let $S$ be the $k$-subalgebra of $H^*(E,k)$ generated by the image of the Bockstein map $H^1(E,k)\to H^2(E,k)$. This is a polynomial algebra over $k$ on $r$ generators and the map $S\subset H^*(E,k)$ is finite. Set $A=kE$. This is an artinian local ring, isomorphic to $k[z_1,\dots,z_r]/(z_1^p,\dots,z_r^p)$, where $r$ is the rank of $E$. 
 
 The BGG correspondence we use is formulated in terms of appropriate homotopy categories, so let  $\KInj A$ denote the homotopy category of injective $A$-modules  viewed as an $H^*(E,k)$-linear category; see Section~\ref{se:stmod}. By restriction, it is also an $S$-linear category.

 Let $B$ be the Koszul complex of $A$, viewed as a dg $A$-algebra. Let $\Lambda$ be the exterior algebra on $r$ generators of (upper) degree 1, viewed as a dg $k$-algebra with zero differential. One has the following $S$-linear compactly generated categories and $S$-linear functors relating them:
\begin{equation}
\label{eq:bgg}
\begin{tikzcd}
    \dcat S \arrow[r,leftrightarrow,"\sim"] &
    \KInj \Lambda \arrow[r,leftrightarrow, "\sim"] &
    \KInj B \arrow[r,yshift= -.75ex,"g" swap] &
    \KInj A \arrow[l,yshift=.75ex,"f" swap]
\end{tikzcd}
\end{equation}
Here $\dcat S$ is the derived category of dg modules over $S$, viewed as a dg algebra with zero differential. The functor $g$ is restriction along the map $A\to B$ of dg algebras,
and $f=B\otimes_A-$ is its left adjoint; it is exact because $B$ is finite-free as an $A$-complex. The equivalence on the left is from \cite[Theorem~6.2]{Benson/Iyengar/Krause:2011b} and the one in the middle follows from \cite[Proposition~4.6]{Benson/Iyengar/Krause:2011b} applied to the map $\varphi$ in \cite[Lemma~7.1]{Benson/Iyengar/Krause:2011b}. The functors $f,g$, though not equivalences, have the   following properties that suffice for the present purpose.

 \begin{lemma}
\label{le:AB}
The functors $f,g$ preserve coproducts and products, and 
 \[
 \thick(gf(X))=\thick(X)\quad\text{for all $X\in \KInj{A}$.}
 \]
\end{lemma}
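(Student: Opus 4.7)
The statement has two parts: preservation of (co)products by $f$ and $g$, and the thick-subcategory identity $\thick(gf(X))=\thick(X)$. Preservation is essentially formal; the identity is the substantive assertion.

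For preservation, $g$ is restriction of scalars along $A\to B$, so acts as the identity on underlying chain complexes. Since $A$ is noetherian and Frobenius, arbitrary direct sums and direct products of complexes of injective $A$-modules remain componentwise injective, so (co)products in $\KInj A$ and $\KInj B$ are computed on underlying complexes, and $g$ preserves both. The functor $f=B\otimes_A-$ preserves coproducts because tensor does, and preserves products because $B$ is bounded with finitely generated free components, which lets one commute $B\otimes_A-$ past arbitrary products degree-by-degree.

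For the inclusion $\thick(gf(X))\subseteq\thick(X)$, I would filter $B$ as a dg $A$-module by its Koszul (exterior) degree to obtain a finite filtration $0=F_{-1}B\subset F_0B\subset\cdots\subset F_rB=B$ whose associated graded pieces are finite direct sums of shifts of $A$. Tensoring with $X$ over $A$ yields a finite filtration of $gf(X)=B\otimes_A X$ whose subquotients are finite direct sums of shifts of $X$; iterated cone formation then places $gf(X)$ in $\thick(X)$.

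For the reverse inclusion $X\in\thick(gf(X))$, I would exploit that $A=kE$ is commutative, local and artinian, so $\Spec A$ consists of a single point. The Hopkins--Neeman classification of thick subcategories of $\mathrm{Perf}(A)$ then implies that any perfect $A$-complex with nonzero cohomology generates all of $\mathrm{Perf}(A)=\thick_{\mathrm{Perf}(A)}(A)$. Since $B$ is perfect and has $H^0(B)=A/(z_1,\ldots,z_r)=k\neq 0$, it follows that $A\in\thick_{\mathrm{Perf}(A)}(B)$. Applying the triangulated functor $-\otimes_A X\colon\mathrm{Perf}(A)\to\KInj A$, which is exact because perfect complexes are $K$-flat and which lands in $\KInj A$ because $A$ is Frobenius, transports this membership to $X=A\otimes_A X\in\thick(B\otimes_A X)=\thick(gf(X))$. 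The most delicate step will be ensuring that this tensor transport of thick-subcategory membership works correctly in the unbounded setting of $\KInj A$; this reduces to checking that $B\otimes_A X$ is componentwise injective (hence $K$-injective over the noetherian ring $A$) for every $X\in\KInj A$.
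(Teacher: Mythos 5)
Your proof is correct and follows the same overall strategy as the paper: establish that $B$ and $A$ generate the same thick subcategory of finite free $A$-complexes, then apply the triangulated functor $-\otimes_A X$ to transport this to $\KInj A$. The difference lies in how you establish $\thick(B)=\thick(A)$ inside $\mathbf{K}^b(\mathrm{proj}\,A)$. The paper treats both $A$ and $B$ as Koszul objects (on the zero ideal and on the maximal ideal, respectively) and invokes \cite[Lemma~6.0.9]{Hovey/Palmieri/Strickland:1997a}, which gives $\thick(\kos{A}{\fm})=\thick(A)$ directly from the nilpotence of $\fm$. You instead split the argument into the two inclusions: the easy direction $\thick(B)\subseteq\thick(A)$ via the Koszul filtration (whose graded pieces are shifts of $A$), and the substantive direction $A\in\thick(B)$ via the Hopkins--Neeman classification of thick subcategories of $\mathrm{Perf}(A)$ over the one-point spectrum of the local artinian ring $A$. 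Both routes are sound; the Koszul-object argument is more self-contained and symmetric, while Hopkins--Neeman is a heavier tool but immediately recognisable. Your remark at the end about $B\otimes_A X$ being componentwise injective is the right thing to check for the tensor transport step; the paper phrases this more abstractly as an action of $\mathbf{K}^b(\mathrm{proj}\,A)$ on $\KInj A$ in Stevenson's sense, but the content is the same.
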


\begin{proof}
It is clear that $f,g$ preserve coproducts and products. To verify the equality of thick subcategories, we consider $\cat P={\mathbf K}^b(\mathrm{proj}\, A)$, the homotopy category of finite projective (equivalently, free) complexes of $A$-modules. It is a tensor triangulated category with product $-\otimes_A-$ and unit $A$. Since $\End^*_{\cat P}(A)=A$, the ring $A$ has a canonical action on $\cat P$. The $A$-complex $B$ is a Koszul object for the maximal ideal of $A$, and $A$ itself can be viewed as a Koszul object for the zero ideal. Since the maximal ideal of $A$ is nilpotent, one has $\thick(B)=\thick(A)$ as subcategories of $\cat P$; see~\cite[Lemma~6.0.9]{Hovey/Palmieri/Strickland:1997a}.

The assignment $X\mapsto P\otimes_AX$ for $P\in \cat P$ and $X\in \KInj A$ defines an action of $\cat P$ on $\KInj A$, in the sense of Stevenson~\cite{Stevenson:2013a}.
Applying $-\otimes_AX$ to the equality $\thick(B)=\thick(A)$ yields that $\thick(B\otimes_AX)=\thick(X)$ as subcategories of $\KInj{A}$, as desired.
 \end{proof}

\begin{proof}[Proof of Theorem~\ref{th:spec-E}]
We keep the notation introduced above. Since $A=kE$ is a local ring the trivial $kE$-module $k$ generates $\stmod kE$. It follows that the unit $\gam_\fp k$ of $\cat C$ generates the category; see Theorem~\ref{th:regular-paper}. Thus every thick subcategory of $\cat C$ is tensor ideal, and $\Thick^{\otimes}(\cat C)=\Thick(\cat C)$. In particular, the tensor structure on the category is no longer critical. This is important because the argument below uses the BGG correspondence and the functors that appear in it are not all monoidal.

 Recall the inclusion of $k$-algebras $S\subset H^*(E,k)$. The induced map 
 \[
 \Spec H^*(E,k) \to \Spec S
 \]
 is a homeomorphism.  Moreover identifying $\fp$ with its image in $\Spec S$, the natural map $S\phat \to H^*(E,k) \phat =R$ induces a homeomorphism 
\[
\Spec R \longiso\Spec S\phat
\]
that identifies $\supp_{R}(X)$ with $\supp_{S\phat}(X)$ for each $X$ in $\cat C$. In particular, one gets that $\supp_S(\gam_{\fp}k)=\Spec S\phat$.
Therefore, in view of Lemma~\ref{le:hopkins}, our task is to  verify that for objects $X,Y$ in $\cat C$, the following statement holds:
\begin{equation}
    \label{eq:dcat-hopkins}
    \supp_{S\phat}(X)\subseteq \supp_{S\phat}(Y)\quad\implies\quad  \thick(X)\subseteq\thick(Y)\,.
\end{equation} 
 From now on, we view $\cat C$ as an $S\phat $-linear category via restriction along $S\phat \to R$.  

Let $\cat C_A$ and $\cat C_B$ denote the subcategory of compact objects in $\KInj A$ and $\KInj B$, respectively; the compact objects in $\dcat S$ coincide with  $\thick(S)$. Since $f,g$ preserve coproducts, and the other functors in \eqref{eq:bgg} are equivalences, they all take compact objects to compact objects. Because these functors are also $S$-linear, applying $\gam_\fp$ to the diagram \eqref{eq:bgg} yields the diagram of $S\phat$-linear functors:
\[
\begin{tikzcd}
    \thick(\gam_\fp S) \arrow[r,leftrightarrow,"\sim"] &
    \thick(\gam_\fp(\cat C_B)) \arrow[r,yshift= -.75ex,"g" swap] &
    \thick(\gam_\fp(\cat C_A)) = \cat C \arrow[l,yshift=.75ex,"f" swap]
\end{tikzcd}
\]
Our task is to verify that property \eqref{eq:dcat-hopkins} holds for objects in $\cat C$. It follows from Theorem~\ref{th:dga} below that this property holds for objects in  $\thick(\gam_\fp S)$ and hence, by the equivalence above, also  for objects in $\thick(\gam_\fp(\cat C_B))$.  It remains to deduce that this property descends to $\cat C$, by Lemmas~\ref{le:hopkins-cat} and \ref{le:AB}.
\end{proof}

\subsection*{Formal dg algebras}
Let $S$ be a noetherian, graded-commutative algebra, viewed as a dg algebra with zero differential, and $\dcat S$ the derived category of dg $S$-modules. 
Then $\dcat S$ is a rigidly compactly generated tensor triangulated category, with product $-\lotimes_S-$ and unit $S$.

 \begin{theorem}
 \label{th:dga}
Fix $\fp\in \Spec S$ and set $\cat C=\thick(\gam_\fp S)$, viewed as a subcategory of $\dcat S$.  The natural map $S\to \End^*_{\cat C}(\gam_\fp S)$ induces an isomorphism $S\phat \cong \End^*_{\cat C}(\gam_\fp S)$, and taking cohomological support induces a lattice isomorphism
\[
   \Thick(\cat C) \xrightarrow{\ \supp_R \ } \{\text{Thomason subsets of }\Spec S\phat \} \,.
\]
 \end{theorem}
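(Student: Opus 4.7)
The plan has two parts: identify the endomorphism ring with $S\phat$, and then apply the Hopkins-type criterion of Lemma~\ref{le:hopkins}.

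For the ring identification, I would appeal to Greenlees--May (MGM) duality for the noetherian graded ring $S$. The object $\gam_\fp S$ is, by construction, derived local cohomology at $\fp$ of the localisation $S_\fp$. For noetherian $S$, derived torsion $R\varGamma_\fp$ and derived completion $L\Lambda_\fp$ form an adjoint pair whose unit/counit are isomorphisms on the $\fp$-torsion (resp.~$\fp$-complete) parts, and one has $\RHom_S(\gam_\fp S,M)\cong L\Lambda_\fp M$ in general. Applying this with $M=\gam_\fp S$ gives
\[
\End^*_{\dcat S}(\gam_\fp S)\cong L\Lambda_\fp\gam_\fp S\cong L\Lambda_\fp S_\fp\cong S\phat,
\]
the last isomorphism holding because derived $\fp$-adic completion of a noetherian ring coincides with classical completion, concentrated in degree zero.

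For the classification, the plan is to verify the three conditions of Lemma~\ref{le:hopkins} with $R=S\phat$ acting canonically on $\cat C$. Condition (1) (finiteness of cohomological support) follows by a dévissage: $\gam_\fp S$ generates $\cat C$ by definition, $R$ is noetherian as the completion of a noetherian graded ring, and $\Hom^*_{\cat C}(\gam_\fp S,X)$ is finitely generated over $R$ for each $X\in\cat C$ by induction on the triangles defining $X$ in $\thick(\gam_\fp S)$. Condition (2) is immediate since $\supp_R(\gam_\fp S)=\Spec R$. The content of the theorem is condition (3): $\supp_R(Y)\subseteq\supp_R(X)$ implies $\thick(Y)\subseteq\thick(X)$.

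For (3), my plan is to realise $\cat C$ as the category of perfect complexes over $R=S\phat$. Since $\gam_\fp S$ is a generator of $\cat C$ with graded endomorphism ring $R$ concentrated in cohomological degree zero as a dg-algebra (zero differential), a standard dg-reconstruction argument produces a triangle equivalence
\[
\RHom_{\cat C}(\gam_\fp S,-)\colon \cat C\longiso \thick(R)\subseteq \dcat R\,.
\]
Under this equivalence cohomological support over $R$ transports to the usual support of perfect complexes, and the Hopkins--Neeman classification in the graded-noetherian setting yields (3). The main obstacle will be checking rigorously that this reconstruction is an equivalence in the graded setting and that cohomological support is preserved; once that is done the rest is formal. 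A cleaner alternative, which is likely the route actually taken, is to invoke directly the main results of \cite{Benson/Iyengar/Krause/Pevtsova:2024b}, which already essentially classify the thick subcategories of $\thick(\gam_\fp S)\subseteq\dcat S$ by Thomason subsets of $\Spec S\phat$ for any noetherian graded-commutative $S$.
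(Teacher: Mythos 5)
Your proposal follows essentially the same route as the paper: identify the dg endomorphism algebra of the compact generator $\gam_\fp S$ as (a formal model of) $S\phat$ via Greenlees--May duality, use derived Morita/dg-reconstruction to realise $\cat C$ as the perfect dg modules over the noetherian graded ring $\widehat{S}$, and then invoke a graded Hopkins--Neeman classification. The reference that fills the ``obstacle'' you flag is \cite[Theorem~3.2]{Carlson/Iyengar:2015a} (together with \cite[Theorem~4.2]{Avramov/Buchweitz/Iyengar/Miller:2010a} to identify $\thick(\widehat S)$ with perfect dg modules); the paper also first replaces $S$ by $S_\fp$ to reduce to the local case, and transports the classification back through Lemma~\ref{le:hopkins-cat} rather than re-verifying the three conditions of Lemma~\ref{le:hopkins} directly.
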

 
 \begin{proof}
 One has an isomorphism $\gam_\fp S\cong \gam_\fp(S_\fp)$; thus, replacing $S$ by $S_\fp$ we can assume that $S$ is local and that $\fp=\fm$ is the maximal ideal of $S$. Let $\widehat S$ denote the $\fm$-adic  completion of $S$ with respect to $\fm$. Derived Morita theory yields an  adjoint pairs of $\widehat S$-linear triangle equivalences 
\[
\begin{tikzcd}[column sep=large]
\thick_{\widehat S}(\widehat S) \arrow[rr, rightarrow, yshift=.75ex, "{\gam_{\fm}S\lotimes_{\widehat S}-}"] 
	&& \arrow[ll, rightarrow,yshift=-.75ex, "{\RHom_S(\gam_{\fm}S,-)}"]
		\thick_S(\gam_\fm S) \,.
\end{tikzcd}
\]
It thus suffices to prove that the lattice of thick subcategories of $\thick(\widehat S)\subset \dcat{\widehat S}$ is classified by $\Spec \widehat S$. 
This thick subcategory consists precisely of the perfect dg $\widehat S$-modules, by, for instance, \cite[Theorem~4.2]{Avramov/Buchweitz/Iyengar/Miller:2010a}, and $\widehat S$ is a noetherian  dg algebra with differential zero. It follows from \cite[Theorem~3.2]{Carlson/Iyengar:2015a} that taking cohomological support gives a lattice isomorphism between the lattice of thick subcategories of $\thick_{\widehat S}(\widehat S)$ and the Thomason subsets of $\Spec \widehat S$. To conclude the proof of the theorem, it remains to apply Lemma~\ref{le:hopkins-cat}.
 \end{proof}

\section{Finite groups}
In this section we extend Theorem~\ref{th:spec-E} to arbitrary finite groups, thereby justifying Theorem~\ref{th:main}. Throughout $G$ is a finite group,  $k$ a field of positive characteristic $p$ dividing $|G|$, and $\fp$ a homogeneous, non-maximal, prime ideal in $H^*(G,k)$, the cohomology algebra of $G$. Consider the category of local dualisable objects at $\fp$:
\[
\cat C\coloneqq \duals{(\gam_\fp\StMod(kG))} \quad\text{and}\quad R\coloneqq \End^*_{\cat C}(\gam_\fp k)\,.
\]
The result below is a reformulation of Theorem~\ref{th:main}.

\begin{theorem}
\label{th:spec-G}
The  map $H^*(G,k) \to R$ induces an isomorphism  $H^*(G,k) \phat \cong R$ of rings, and taking cohomological support induces a lattice isomorphism
\[
   \Thick^{\otimes}(\cat C) \xrightarrow{\ \supp_R \ } \{\text{Thomason subsets of }\Spec R \} \,.
\]
\end{theorem}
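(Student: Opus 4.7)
The isomorphism $H^*(G,k)\phat \cong R$ is the specialisation of \eqref{eq:benson-con} to $\gam_\fp k$, so the content lies in the classification. By Corollary~\ref{cor:noeth} cohomological support on $\cat C$ is finite, so I apply Lemma~\ref{le:hopkins}. Condition~(1) is Corollary~\ref{cor:noeth}; condition~(2) follows because $\cat C=\thick(\gam_\fp k)$ by Theorem~\ref{th:regular-paper} and $\supp_R(\gam_\fp k)=\Spec R$. The real content is the detection property~(3): for objects $M,N\in\cat C$, I must show that $\supp_R(N)\subseteq\supp_R(M)$ implies $N\in\thick^\otimes(M)$.

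The plan is to reduce to the elementary abelian case via Quillen stratification. For each elementary abelian $p$-subgroup $E\leqslant G$ and each prime $\fq\in\Spec H^*(E,k)$ lying over $\fp$ under the map on spectra induced by restriction of cohomology, restriction induces an $R$-linear tensor functor
\[
\mathrm{res}^G_{E,\fq}\colon\cat C\longrightarrow \cat C_{E,\fq}\coloneqq \duals{(\gam_\fq\StMod(kE))},
\]
where the $R$-linear structure on the target is through the completed restriction map $R\to H^*(E,k)_\fq^{\wedge}$. By Theorem~\ref{th:spec-E}, each $\cat C_{E,\fq}$ has its thick tensor ideals classified by cohomological support. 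The crucial input is a stratification result (Theorem~\ref{th:stratification}), an analogue of Quillen--Avrunin--Scott for local dualisable modules, which should say that $\supp_R(M)$ equals the union over $(E,\fq)$ of the images of $\supp(\mathrm{res}^G_{E,\fq}M)$ along the canonical maps on spectra. Granted this, $\supp_R(N)\subseteq\supp_R(M)$ yields the corresponding inclusion of supports in each $\cat C_{E,\fq}$, and hence, by Theorem~\ref{th:spec-E}, $\mathrm{res}^G_{E,\fq}(N)\in\thick^\otimes(\mathrm{res}^G_{E,\fq}(M))$ for every pair $(E,\fq)$.

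To lift this containment back to $\cat C$, I apply an adjoint-pair argument modelled on Lemma~\ref{le:hopkins-cat}: take $\cat D=\prod_{(E,\fq)}\cat C_{E,\fq}$, let $f$ be the product of the functors $\mathrm{res}^G_{E,\fq}$, and let $g$ be built from the corresponding (co)inductions. Frobenius reciprocity together with Quillen's theorem (that restriction is jointly faithful modulo nilpotents over all elementary abelian subgroups) should imply that $gf(M)$ thickly generates $M$ in $\cat C$. Combining this with the containment $\mathrm{res}^G_{E,\fq}(N)\in\thick^\otimes(\mathrm{res}^G_{E,\fq}(M))$ and applying (co)induction termwise places $N$ in $\thick^\otimes(M)$, as required.

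The main obstacle is the formal compatibility of the $\gam_\fp$-localisation with restriction and (co)induction from elementary abelian subgroups, coupled with the fact that a single $\fp$ generally admits several preimages $\fq$ across different elementary abelian $E$'s (this is exactly the content of Quillen stratification). One must work with the entire family $\{(E,\fq)\}$ at once and show that completion interacts coherently with induction and restriction. Establishing Theorem~\ref{th:stratification} for local dualisable modules---tracking how support pushes forward along the change-of-rings from the various completed cohomology rings $H^*(E,k)_\fq^{\wedge}$ down to $R$---is therefore the essential technical step; once it is in place, the reduction to Theorem~\ref{th:spec-E} is the formal consequence sketched above.
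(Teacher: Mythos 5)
Your high-level plan is right in one important respect: you correctly identify that Corollary~\ref{cor:noeth} and $\cat C=\thick(\gam_\fp C)$ reduce the problem to Lemma~\ref{le:hopkins}(3), and that the remaining work is a reduction to the elementary abelian case via Quillen stratification. But the mechanism you propose for the descent from $G$ to elementary abelians differs from the paper's and has two genuine gaps.

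First, a circularity issue: you invoke Theorem~\ref{th:stratification} (Quillen--Avrunin--Scott for local dualisables) as the crucial input. In the paper this theorem is derived \emph{after} Theorem~\ref{th:spec-G}, as a byproduct of exactly the two technical ingredients used in the proof of Theorem~\ref{th:spec-G} itself (the nil-faithfulness of restriction from Lemma~\ref{le:map1} and the completed Quillen $F$-isomorphism from Lemma~\ref{le:map3}). Your sketch does not propose an independent proof of Theorem~\ref{th:stratification}, so as written this step is circular.

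Second, and more seriously, your descent step invokes an adjoint-pair argument ``modelled on Lemma~\ref{le:hopkins-cat}'', with $f=\prod\mathrm{res}^G_{E,\fq}$ and $g$ the product of (co)inductions. But the hypothesis of Lemma~\ref{le:hopkins-cat} is the equality $\thick(gf(x))=\thick(x)$ for all $x$, which is a much stronger statement than what Chouinard/Quillen provide. Chouinard's theorem yields only that restriction jointly detects vanishing (and hence, via Balmer, that $\rho$ is nil-faithful); it does not give that the unit $x\to gf(x)$ places $x$ in $\thick(gf(x))$. By the projection formula $gf(M)\cong\bigl(\bigoplus_E k{\uparrow^G_E}\bigr)\otimes_k M$, so $M\in\thick^\otimes(gf(M))$ would amount to $\gam_\fp k\in\thick^\otimes\bigl(\bigoplus_E\gam_\fp k{\uparrow^G_E}\bigr)$, which is essentially a form of the classification one is trying to prove; moreover Lemma~\ref{le:hopkins-cat} needs the \emph{plain} thick subcategory version, and for $G$ not a $p$-group thick and tensor-thick differ. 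Contrast this with the elementary abelian case: Lemma~\ref{le:AB} gets $\thick(gf(X))=\thick(X)$ from the nilpotency of the maximal ideal of $kE$ (so $\thick(B)=\thick(A)$), and there is no analogue of that argument for restriction to subgroups.

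The paper's actual route avoids both problems by working at the level of Balmer spectra rather than trying to transport inclusions of thick subcategories across the adjunction. One sets up the commutative square \eqref{eq:colimsquare-partial} of spectra over the Quillen category $\cat A(G)_\fp$. The map (4) is onto by Balmer's general surjectivity theorem since $H^*(G,k)\phat$ is noetherian; the map (1) is onto by Balmer's theorem on nil-faithful tensor functors, with nil-faithfulness reduced (via the internal-Hom trick) to Chouinard's theorem; the map (2) is a homeomorphism by Theorem~\ref{th:spec-E} applied prime-by-prime over $\fp$ (Lemma~\ref{le:map2}); and the map (3) is a homeomorphism by localising and completing Quillen's uniform $F$-isomorphism (Lemma~\ref{le:map3}). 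A diagram chase then forces (4) to be a homeomorphism, and Proposition~\ref{pr:bs-lattices} translates that back into the lattice isomorphism. If you want to pursue your module-level descent, you would need to replace the appeal to Lemma~\ref{le:hopkins-cat} by an argument establishing the thick-tensor-ideal equality from scratch; the paper's spectral argument sidesteps this entirely.
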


The proof is by a reduction to the case of elementary abelian subgroups, which is settled by Theorem~\ref{th:spec-E}. It takes up the entire section. Throughout we keep  the notation as in the statement of the theorem. 

In what follows for any subgroup $E$ (the focus is on elementary abelian subgroups), we write $\cat T_E$ instead of $\StMod kE$, for ease of notion.  Let $\cat A(G)$ be the category, introduced by Quillen~\cite[\S 5]{Quillen:1971a}, whose objects are the elementary abelian $p$-subgroups of $G$, and  the morphisms from $E$ to $E'$ consist of group homomorphisms induced by conjugation in $G$. The proof of the theorem consists of constructing the commutative diagram of topological spaces
\begin{equation}
\label{eq:colimsquare}
    \begin{tikzcd}
\underset{E\in \cat A(G)}{\colim} \bSpec \duals{(\gam_{\fp}{\cat T}_E)} \ar[d,twoheadrightarrow, "(1)" swap] \ar[r,"\sim" swap, "(2)"] 
    & \underset{E\in \cat A(G)}{\colim} \Spec H^*(E,k)\phat \ar[d, "\sim" swap, "(3)"] \\
 \bSpec \duals{(\gam_{\fp}{\cat T}_G)} \ar[r,twoheadrightarrow,"(4)" swap] &  \Spec H^*(G,k)\phat \,.
\end{tikzcd}
\end{equation}
and justifying that (1) and (4) are surjections, while (2) and (3) are homeomorphisms. Once this is done, it is clear that (4) must be a homeomorphism as well, which is the desired result; see Proposition~\ref{pr:bs-lattices}.

Setting up the commutative diagram already takes some preparation.  To start with, for any subgroup $E\leq G$ restriction induces a tensor triangulated functor
\[
\rho_E\colon \cat T_G \longrightarrow \cat T_E
\]
and a map of $k$-algebras $\rho^*_E \colon H^*(G,k)\longrightarrow H^*(E,k)$. Hence one gets an induced map on Zariski spectra:
\[
\Spec \rho^*_E \colon \Spec H^*(E,k)\longrightarrow  \Spec H^*(G,k)\,.
\]
We say that \emph{ $\fp$ is supported on $E$} to mean that $\fp$ is in the image of the map above. Such $E$ are the only relevant elementary abelian groups for the present purposes.

Viewing $\cat T_E$ as an $H^*(G,k)$-linear category via $\rho^*_E$, consider the subcategory $\gam_{\fp}(\cat T_E)$ of $\fp$-local and $\fp$-torsion modules. This category is not trivial (that is to say, not equivalent to $\{0\}$) if, and only if, $\fp$ is supported on $E$; see~\cite[Theorem~11.2]{Benson/Iyengar/Krause:2011b}. By \cite[Corollary~7.8]{Benson/Iyengar/Krause:2012b}, the functor $\rho_E$ restricts to the subcategory of $\fp$-local and $\fp$-torsion objects:
\[
\rho_E\colon \gam_{\fp}\cat T_G\longrightarrow \gam_{\fp}\cat T_E\,.
\]
Since it is a tensor triangulated functor, the natural map~\eqref{eq:bs-zs} gives a commutative diagram of continuous maps
\begin{equation}
\label{eq:square}
    \begin{tikzcd}
\bSpec \duals{(\gam_{\fp}{\cat T}_E)} \ar[d,"(1)" swap] \ar[r, "(2)"] 
    & \Spec H^*(E,k)\phat \ar[d, "(3)"] \\
 \bSpec \duals{(\gam_{\fp}{\cat T}_G)} \ar[r,twoheadrightarrow,"(4)" swap] &  \Spec H^*(G,k)\phat \,.
\end{tikzcd}
\end{equation}
By \cite[Theorem~7.3]{Balmer:2010b}, the map (4) is onto because the ring $H^*(G,k)\phat$ is noetherian. The map (3) is induced by the map
\[
H^*(G,k)\phat\longrightarrow H^*(E,k)\phat
\]
obtained from $\rho^*_E$ by localising at $\fp$ and then completing. The next result adds further information to the diagram.

\begin{lemma}
\label{le:map2}
 Let $E$ be an elementary abelian $p$-group on which $\fp$ is supported.  The composition of natural maps of rings
\[
H^*(E,k)\longrightarrow \End^*_{\cat T_E}(k)\longrightarrow  \End^*_{\cat T_E}(\gam_{\fp} k)
\]
induces an isomorphism  $H^*(E,k)\phat \iso \End^*_{\cat T_E}(\gam_{\fp} k)$. In particular, $\End^*_{\cat T_E}(\gam_{\fp}k)$ is a noetherian ring. Moreover,  the map \emph{(2)} in diagram \eqref{eq:square} is a homeomorphism.
\end{lemma}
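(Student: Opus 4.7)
The strategy is to exploit the finiteness of the restriction map $\rho_E^*\colon H^*(G,k)\to H^*(E,k)$.  Since $E\leq G$ is a subgroup, this map is finite, so the fibre
\[
\Sigma\coloneqq \{\fq\in\Spec H^*(E,k):(\rho_E^*)^{-1}(\fq)=\fp\}
\]
is a finite set of pairwise incomparable homogeneous primes, and is nonempty precisely because $\fp$ is supported on $E$.

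My first step would be to establish the orthogonal decomposition
\[
\gam_\fp k\,\cong\,\bigoplus_{\fq\in\Sigma}\gam_\fq k\qquad\text{in }\cat T_E,
\]
where the left-hand $\gam_\fp$ is taken relative to the $H^*(G,k)$-action (via $\rho_E^*$) and each $\gam_\fq$ on the right uses the native $H^*(E,k)$-action.  This is a standard consequence of the finiteness of $\rho_E^*$ and the stratification machinery of Benson--Iyengar--Krause; the requisite compatibility for the $\fp$-local $\fp$-torsion functor with base change along a finite map of cohomology rings is implicit in the discussion surrounding \cite[Corollary~7.8]{Benson/Iyengar/Krause:2012b}.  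The pairwise vanishing $\Hom^*_{\cat T_E}(\gam_\fq k,\gam_{\fq'}k)=0$ for distinct $\fq,\fq'\in\Sigma$ is built into such a decomposition.  Applying \eqref{eq:benson-con} internally to $\cat T_E$ at each $\fq\in\Sigma$ yields $H^*(E,k)^\wedge_\fq\iso\End^*_{\cat T_E}(\gam_\fq k)$, and assembling these across $\Sigma$ gives
\[
\End^*_{\cat T_E}(\gam_\fp k)\,\cong\,\prod_{\fq\in\Sigma}H^*(E,k)^\wedge_\fq\,.
\]
Because $H^*(E,k)$ is finitely generated over $H^*(G,k)$, the right-hand side is precisely $H^*(E,k)\phat\coloneqq H^*(E,k)\otimes_{H^*(G,k)}H^*(G,k)\phat$, which is noetherian as a flat base change of the noetherian ring $H^*(G,k)\phat$.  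The composition in the statement of the lemma factors through this completion, producing the asserted ring isomorphism.

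For the homeomorphism assertion, the decomposition above lifts to an equivalence of essentially small tensor triangulated categories
\[
\duals{(\gam_\fp\cat T_E)}\,\simeq\,\prod_{\fq\in\Sigma}\duals{(\gam_\fq\cat T_E)}\,.
\]
Theorem~\ref{th:spec-E} applied at each $\fq\in\Sigma$ classifies the thick tensor ideals of $\duals{(\gam_\fq\cat T_E)}$ by the Thomason subsets of $\Spec H^*(E,k)^\wedge_\fq$; by the remark following Proposition~\ref{pr:bs-lattices}, this forces the comparison map $\bSpec\duals{(\gam_\fq\cat T_E)}\to\Spec H^*(E,k)^\wedge_\fq$ to be a homeomorphism.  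Taking the disjoint union of these comparison maps across $\Sigma$ identifies map (2) in \eqref{eq:square} with a homeomorphism
\[
\bigsqcup_{\fq\in\Sigma}\bSpec\duals{(\gam_\fq\cat T_E)}\,\longiso\,\bigsqcup_{\fq\in\Sigma}\Spec H^*(E,k)^\wedge_\fq\,\cong\,\Spec H^*(E,k)\phat\,.
\]

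The main obstacle I anticipate is pinning down the orthogonal decomposition $\gam_\fp k\cong\bigoplus_\Sigma\gam_\fq k$ cleanly from the existing literature and matching the canonical map $H^*(E,k)\to\End^*_{\cat T_E}(\gam_\fp k)$ with the product of Benson--Greenlees isomorphisms under the identification $\prod_\Sigma H^*(E,k)^\wedge_\fq\cong H^*(E,k)\phat$.  Everything else is a formal synthesis of Theorem~\ref{th:spec-E}, Proposition~\ref{pr:bs-lattices}, and the Benson--Greenlees isomorphism~\eqref{eq:benson-con}.
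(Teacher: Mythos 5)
Your proposal follows essentially the same route as the paper: finiteness of $\rho_E^*$ gives finitely many primes $\fq_1,\dots,\fq_n$ over $\fp$, the decomposition $\gam_\fp k\cong\bigoplus_i\gam_{\fq_i}k$ with pairwise orthogonality (the paper cites \cite[Theorem~11.13]{Benson/Iyengar/Krause:2011b} for the vanishing $\Hom^*_{\cat T_E}(\gam_{\fq_i}k,\gam_{\fq_j}k)=0$), the Benson--Greenlees isomorphism \eqref{eq:benson-con} at each $\fq_i$, and componentwise reduction to Theorem~\ref{th:spec-E} for the homeomorphism. One small quibble: noetherianity of $H^*(E,k)\phat$ is not a consequence of flat base change in general but of $H^*(E,k)\phat$ being module-finite over the noetherian ring $H^*(G,k)\phat$; this does not affect the correctness of the argument.
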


\begin{proof}
The map $\rho^*_E$ is finite. Let $\fq_1, \ldots, \fq_n  \in \Spec H^*(E,k)$ be  the primes lying over $\fp \in \Spec H^*(G,k)$ with respect to $\rho^*_E$. For any $X \in \cat T_E$, we have 
\[
\gam_\fp X = \bigoplus_{i=1}^n \gam_{\fq_i} X\,,
\]
and therefore $\duals{(\gam_\fp \cat T_E)} =  \prod_{i=1}^n \duals{(\gam_{\fq_i} \cat T_E)}$. The finiteness of $\rho^*_E$ also yields that $\fq_i\notin V(\fq_j)$ for $i\ne j$, and hence we get from \cite[Theorem~11.13]{Benson/Iyengar/Krause:2011b} that
\[
\Hom^*_{\cat T_E}(\gam_{\fq_i}k,\gam_{\fq_j}k) = 0 \quad\text{when $i\ne j$.}
\]
Thus the decomposition of $\gam_\fp k$ yields the first of the following isomorphism of rings:
\[
\End^*_{\cat T_E}(\gam_{\fp} k) \longiso  \bigoplus_{i=1}^n \End^*_{\cat T_E}(\gam_{\fq_i} k) \longiso  \bigoplus_{i=1}^n  H^*(E,k)^{^\wedge}_{\fq_i}\,.
\]
The second one is by \eqref{eq:benson-con}. It remains to observe that the ring on the right is naturally isomorphic to $H^*(E,k)\phat$. 

Finally, the decomposition above also allows one to decompose both sides of the map (2) in diagram \eqref{eq:square} into $\fq_i$-components and appeal to Theorem~\ref{th:spec-E}. This justifies the last part of the statement.
\end{proof}

Consider the category $\cat A(G)$ introduced above. We write $\cat A(G)_\fp$ for the subcategory consisting of elementary abelian $p$-subgroups on which $\fp$ is supported. The assignments
\[
E\mapsto \bSpec (\gam_{\fp}{\cat T}_E) \quad\text{and}\quad 
E\mapsto \Spec H^*(E,k)\phat
\]
define functors from $\cat A(G)_\fp$ to topological spaces, and the maps
\[
\bSpec (\gam_{\fp}{\cat T}_E) \longiso \Spec H^*(E,k)\phat
\]
define a natural transformation; the isomorphism is by Lemma~\ref{le:map2}. Moreover the vertical maps in \eqref{eq:square} are compatible with these maps. Summing, up we get the commutative square:
\begin{equation}
\label{eq:colimsquare-partial}
    \begin{tikzcd}
\underset{E\in \cat A(G)_\fp}{\colim} \bSpec \duals{(\gam_{\fp}{\cat T}_E)} \ar[d, "(1)" swap] \ar[r,"\sim" swap, "(2)"] 
    & \underset{E\in \cat A(G)_\fp}{\colim} \Spec H^*(E,k)\phat \ar[d, "(3)"] \\
 \bSpec \duals{(\gam_{\fp}{\cat T}_G)} \ar[r,twoheadrightarrow,"(4)" swap] &  \Spec H^*(G,k)\phat \,.
\end{tikzcd}
\end{equation}
This is exactly the diagram \eqref{eq:colimsquare}, except that we have restricted the colimit to $\cat A(G)_\fp$, and we are yet to justify that (1) is onto and  (3) is a homeomorphism.

\begin{lemma}
\label{le:map1}
The map \emph{(1)} in diagram \eqref{eq:colimsquare-partial} is surjective. 
\end{lemma}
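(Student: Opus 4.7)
The plan is to invoke Balmer's surjectivity criterion for tensor-triangular spectra and then reduce to a classical detection theorem. First, since a colimit of topological spaces surjects onto a target precisely when the corresponding coproduct does, it is enough to show that the map
\[
\coprod_{E \in \cat A(G)_\fp} \bSpec \duals{(\gam_\fp \cat T_E)} \longrightarrow \bSpec \duals{(\gam_\fp \cat T_G)}
\]
induced by the restriction tt-functors $\rho_E$ is surjective. By \cite{Balmer:2005a}, the image of a single map $\bSpec \rho_E$ equals $V(\ker \rho_E)$, where $\ker\rho_E$ is the thick tensor ideal of objects $X$ with $\rho_E(X)\cong 0$. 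Hence the joint image equals $V\bigl(\bigcap_E \ker\rho_E\bigr)$, and joint surjectivity is equivalent to joint conservativity: if $X \in \duals{(\gam_\fp \cat T_G)}$ satisfies $\rho_E(X) \cong 0$ in $\duals{(\gam_\fp \cat T_E)}$ for every $E \in \cat A(G)_\fp$, then $X \cong 0$.

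Fix such an $X$. I would next upgrade the hypothesis to vanishing of $\rho_E(X)$ in $\cat T_E$ for \emph{every} elementary abelian $p$-subgroup $E \leq G$, not only those in $\cat A(G)_\fp$. Indeed, if $\fp$ is not supported on $E$, then $\gam_\fp \cat T_E = 0$ by \cite[Theorem~11.2]{Benson/Iyengar/Krause:2011b} (as recalled above); since $\rho_E$ is $H^*(G,k)$-linear and $X$ is $\fp$-local and $\fp$-torsion, the restriction $\rho_E(X)$ lands in $\gam_\fp \cat T_E$, which is trivial, so $\rho_E(X) \cong 0$ in $\cat T_E$. For $E \in \cat A(G)_\fp$ the vanishing is the standing hypothesis, regarded inside the ambient category $\cat T_E$.

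The final step is to apply the big-module version of Chouinard's theorem: a $kG$-module is projective if and only if its restriction to every elementary abelian $p$-subgroup is projective. Equivalently, $X \cong 0$ in $\cat T_G = \StMod kG$ if and only if $\rho_E(X) \cong 0$ in $\cat T_E = \StMod kE$ for every elementary abelian $p$-subgroup $E$. This immediately yields $X \cong 0$, as required. The main conceptual input is the interplay between Balmer's image-equals-$V(\ker)$ theorem and the classical detection property of Chouinard; neither step should present a serious technical obstacle given the setup in Sections~\ref{se:lattices} and \ref{se:stmod}, so the proof is expected to be short.
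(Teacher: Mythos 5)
Your overall strategy (reduce to a joint detection statement and invoke Chouinard) is the same as the paper's, and your observation that restriction to $E\notin\cat A(G)_\fp$ automatically kills $\fp$-local $\fp$-torsion objects matches the paper's treatment. However, there is a genuine gap in the middle step. You assert, citing \cite{Balmer:2005a}, that the image of $\bSpec\rho_E$ equals the set of primes containing $\ker\rho_E$, and deduce that joint surjectivity is equivalent to joint \emph{conservativity} (vanishing on objects). That identification of the image is not a theorem of \cite{Balmer:2005a}, and it is false for a general tt-functor: one only has the inclusion of the image in $\{\cat P : \ker\rho_E\subseteq\cat P\}$. The correct criterion is the one the paper actually uses, \cite[Theorem~1.3]{Balmer:2018a}: $\bSpec$ of a tt-functor is surjective if and only if the functor is \emph{nil-faithful}, i.e.\ it detects $\otimes$-nilpotence of \emph{morphisms}, which is a priori a stronger hypothesis than conservativity.

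Because of this, your proof is missing the reduction from nil-faithfulness to conservativity, which is where most of the work in the paper's argument lives. One must take an arbitrary morphism $f\colon M\to N$ in $\gam_\fp\cat T_G$ with $\rho_E(f)=0$ for all $E$, pass via the internal hom adjunction to a morphism $f'\colon k\to\Hom_k(M,N)$ (noting that restriction commutes with $\Hom_k(-,-)$ and that $\otimes$-nilpotence of $f'$ forces $\otimes$-nilpotence of $f$), and then use that $k$ is compact in $\cat T_G$ together with \cite[Lemma~6.2]{Benson/Iyengar/Krause/Pevtsova:2021b} to convert $\otimes$-nilpotence of a map out of $k$ into a statement about vanishing of an object. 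Only at that point does Chouinard's theorem apply. Without this chain of reductions, appealing to conservativity does not establish surjectivity of the map of spectra, so as written the proof does not go through.
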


\begin{proof}
It suffices to prove that the natural map 
\[
\coprod_{E\in \cat A(G)_\fp} \bSpec \duals{(\gam_{\fp}{\cat T}_E)} \longrightarrow \bSpec \duals{(\gam_{\fp}{\cat T}_G)}
\]
is surjective.  This map is induced by the functor
\begin{equation}
\label{eq:rho}    
\rho\colon \gam_{\fp}{\cat T}_G\longrightarrow \prod_{E\in\cat A(G)_\fp}\gam_{\fp}{\cat T}_E \cong \prod_{E\in\cat A(G)} \gam_{\fp} {\cat T}_E
\end{equation}
induced by the restriction functors $\rho_E$. The isomorphism holds because $\gam_\fp{\cat T}_E\equiv \{0\}$ if $\fp$ is not supported on $E$. By \cite[Theorem 1.3]{Balmer:2018a}, it suffices to verify that the functor $\rho$  is nil faithful, meaning that if a map $f\colon M\to N$ in $\gam_{\fp}\cat T_G$ is such that $\rho(f)=0$, then $f$ is tensor nilpotent, that is to say, $f^{\otimes n}=0$ for some $n\gg 0$. 

As noted before, $\gam_{\fp}{\cat T}_G$ is a full subcategory of $\cat T_G$, and this inclusion is compatible with tensor products; ditto for the $\gam_{\fp}{\cat T}_E$. Moreover the functor $\rho$ is induced by the restriction functor,  that we also denote $\rho$, on the whole stable module category:
\[
\rho\colon {\cat T}_G\longrightarrow \prod_{E\in\cat A(G)} {\cat T}_E\,.
\]
Thus it suffices to verify that this functor is nil faithful. 

Let $f\colon M\to N$ be a morphism in $\cat T_G$ such that $\rho_E(f)=0$ for $E\in \cat A(G)$.  A standard argument allows one to assume $M=k$. This goes as follows: $f$ corresponds to a map $f'\colon k\to \Hom_k(M,N)$ and $\rho_E(f)=0$ if and only if $\rho_E(f')=0$, since
\[
\rho_E k = k\quad\text{and}\quad \rho_E \Hom_k(M,N) = \Hom_k (\rho_EM,\rho_EN)\,.
\]
Also, if $f'$ is tensor nilpotent, then so is $f$, because the composition of maps
\[
    k \xrightarrow{(f')^{\otimes n}} \Hom_k(M,N)^{\otimes n}  \longrightarrow \Hom_k(M^{\otimes n},N^{\otimes n})
 \]
 assigns $1$ to $f^{\otimes n}$. Thus we can suppose that $M=k$.  Since $k$ is compact in $\cat T_G$, to verify that $f$ is tensor nilpotent it suffices to verify that $\rho$  is faithful on objects, that is to say, for $X$ in $\cat T_G$, if $\rho_EX=0$ for each $E\in \cat A(G)$, then $X=0$; see, for instance, \cite[Lemma~6.2]{Benson/Iyengar/Krause/Pevtsova:2021b}. It remains to recall that Chouinard~\cite[Theorem~1]{Chouinard:1976a} proved the faithfulness of $\rho$.
\end{proof}

\begin{lemma}
\label{le:map3}
The map \emph{(3)} in diagram \eqref{eq:colimsquare-partial} is a homeomorphism. 
\end{lemma}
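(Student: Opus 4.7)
The plan is to derive Lemma~\ref{le:map3} from the classical Quillen stratification theorem~\cite{Quillen:1971b,Quillen:1971c}, which gives a homeomorphism
\[
\colim_{E \in \cat A(G)} \Spec H^*(E,k) \longiso \Spec H^*(G,k),
\]
by base change along the flat map $\Spec H^*(G,k)\phat \to \Spec H^*(G,k)$.

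The first step is the identification, for each $E \in \cat A(G)_\fp$, of $H^*(E,k)\phat$ with $H^*(E,k) \otimes_{H^*(G,k)} H^*(G,k)\phat$. Since $\rho^*_E$ is a finite ring map, the tensor product is a semi-local finite $H^*(G,k)\phat$-algebra whose maximal ideals are precisely the primes $\fq_i$ of Lemma~\ref{le:map2}, and it therefore agrees with its own $\fp$-adic completion. Passing to spectra this gives a pullback square
\[
\Spec H^*(E,k)\phat \cong \Spec H^*(E,k) \times_{\Spec H^*(G,k)} \Spec H^*(G,k)\phat.
\]

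With this in hand, the content of the lemma is that Quillen's homeomorphism remains valid after base change along $\Spec H^*(G,k)\phat \to \Spec H^*(G,k)$, and after restriction of the indexing category from $\cat A(G)$ to $\cat A(G)_\fp$. Set-theoretically, the restriction is harmless: for any prime $\fp' \subseteq \fp$ of $H^*(G,k)$ (which exhausts the image of $\Spec H^*(G,k)\phat \to \Spec H^*(G,k)$), Quillen produces $E' \in \cat A(G)$ placing $\fp'$ in the image of $\Spec \rho^*_{E'}$; that image is closed in $\Spec H^*(G,k)$ (since $\rho^*_{E'}$ is integral), hence contains the specialisation $\fp$ of $\fp'$, forcing $E' \in \cat A(G)_\fp$. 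Topologically, each component $\Spec H^*(E,k)\phat \to \Spec H^*(G,k)\phat$ is induced by a finite ring map and is therefore closed; because $\cat A(G)_\fp$ is a finite category, the joint map from the finite coproduct is closed, hence a quotient map, matching the quotient topology defining the colimit on the left.

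The main obstacle I foresee is justifying that the various pullback-colimit interchanges work at the topological level rather than merely set-theoretically. The route above circumvents this issue by treating the set-level bijection (via Quillen and upward-closedness of images of integral maps) and the quotient topology (via closedness of finite maps combined with finiteness of $\cat A(G)_\fp$) separately.
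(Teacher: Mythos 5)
Your starting points are sound: the identification $H^*(E,k)\phat \cong H^*(E,k)\otimes_{H^*(G,k)}H^*(G,k)\phat$ is correct (this is exactly the flat base change the paper uses), the restriction of the indexing category from $\cat A(G)$ to $\cat A(G)_\fp$ is handled correctly via specialisation-closedness of images of integral maps, and the closed-surjection-implies-quotient argument is a clean way to get the topology once bijectivity is in hand. However, there is a genuine gap in the bijectivity step, and it stems from the "pullback square." The ring isomorphism $H^*(E,k)\phat\cong H^*(E,k)\otimes_{H^*(G,k)}H^*(G,k)\phat$ gives a pullback of \emph{affine schemes}, but \emph{not} of underlying topological spaces or even underlying sets: the canonical map $\Spec(A\otimes_C B)\to \Spec A\times_{\Spec C}\Spec B$ has fibre over a compatible pair $(\fq_A,\fq_B)$ equal to $\Spec(\kappa(\fq_A)\otimes_{\kappa(\fp')}\kappa(\fq_B))$, which can have several points when the residue field extensions are nontrivial. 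This is precisely the phenomenon at play here --- the paper's own introduction exhibits a prime of $H^*(G,k)_\fp$ that splits in $H^*(G,k)\phat$, so residue fields genuinely grow under completion. Consequently, "base changing Quillen's homeomorphism" does not yield the set-theoretic bijection: you have established surjectivity of $\colim_{E}\Spec H^*(E,k)\phat\to\Spec H^*(G,k)\phat$, but injectivity is a new assertion, not a formal consequence of the uncompleted statement.

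To close the gap you essentially have to redo Quillen's argument at the completed level, which is what the paper does: tensor the uniform $F$-isomorphism $H^*(G,k)\to\lim_{E\in\cat A(G)}H^*(E,k)$ with the flat ring $H^*(G,k)\phat$ (using finiteness of the diagram to commute the limit past the tensor product, and the Frobenius to preserve $p$-power surjectivity), obtaining an $F$-isomorphism $H^*(G,k)\phat\to\lim_{E\in\cat A(G)_\fp}H^*(E,k)\phat$; then invoke Quillen's Lemma~8.11 of \cite{Quillen:1971c} to interchange $\Spec$ and the finite limit, getting $\colim_{E}\Spec H^*(E,k)\phat\iso\Spec\lim_{E}H^*(E,k)\phat\iso\Spec H^*(G,k)\phat$. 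Note also that even your "starting homeomorphism" $\colim_E\Spec H^*(E,k)\iso\Spec H^*(G,k)$ already relies on Lemma~8.11, so your route does not avoid that lemma; it only obscures where it is used. Keep the restriction-to-$\cat A(G)_\fp$ and quotient-map observations --- they are correct and match the paper's reasoning --- but replace the pullback-square step with the $F$-isomorphism argument to obtain injectivity.
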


\begin{proof}
By Quillen's theorem~\cite[Theorem~6.2]{Quillen:1971b} we have a uniform $F$-isomorphism 
 \[
 H^*(G,k) \longrightarrow  \lim_{E\in \cat A(G)} H^*(E,k). 
 \] 
 By this one means that the kernel of the map is nilpotent and that there exists an integer $n$ such that for any $\xi \in H^*(E,k)$, the element $\xi^{p^n}$ is in the image of the map. Localising at $\fp$ and then completing at $\fp$, and keeping in mind that the limit is over a finite diagram, yields an $F$-isomorphism 
 \[
 H^*(G,k)\phat \longrightarrow \lim_{E\in \cat A(G)_\fp} H^*(E,k)\phat.
 \]  
 Here we have used the fact that the map $H^*(G,k)\to H^*(G,k)\phat$ is flat, and that for any finitely generated $H^*(G,k)$-module $M$, there is a natural isomorphism
 \[
 H^*(G,k)\phat\otimes_{H^*(G,k)}M\longiso M\phat\,.
 \]
 The limit is now over the subcategory $\cat A(G)_\fp$ of $\cat A(G)$, for $H^*(E,k)_\fp=0$ when $E$ is not in this subcategory. Applying $\Spec(-)$ yields the isomorphism on the right:
 \[
\underset{E\in \cat A(G)_\fp}{\colim} \Spec H^*(E,k)\phat
    \longiso\Spec \lim_{E\in \cat A(G)_\fp} H^*(E,k)\phat
    \longiso\Spec H^*(G,k)\phat\,.
 \]
 The isomorphism on the left holds because $\cat A(G)_\fp$ is a finite category, and the rings $H^*(E,k)\phat$ are all finite over a fixed ring, namely $H^*(G,k)\phat$; see \cite[Lemma~8.11]{Quillen:1971c}. This is the desired statement.
\end{proof}

At this point we have completed the proof of Theorem~\ref{th:spec-G}.

\subsection*{Quillen stratification for local dualisable modules}
Fix $\fp$ in $\Proj H^*(G,k)$. In the course of the proof of Lemma~\ref{le:map3} we established a version of the Quillen stratification theorem for the spectrum of $\End^*_{\cat T_G}(\gam_\fp k)$,  the cohomology of the unit in the category of local dualisable modules. This is a formal consequence of the classical Quillen stratification theorem. There is an analogue of this result for arbitrary dualisable modules, described below; this requires more work.

Let $M$ be a dualisable module in $\gam_{\fp}\cat T_G$. For any subgroup $H\leq G$ write $I_H(M)$ for the kernel of the natural map
\[
R_H \coloneqq H^*(H,k)\phat \longrightarrow \End^*_{\cat T_H}(M{\downarrow_H})\,.
\]
Set $\supp_{R_H}(M)\coloneqq V(I_H(M))$; this is the cohomological support of $M{\downarrow_H}$, which is a dualisable object of $\gam_\fp\cat T_H$. Since the map above is compatible with restrictions to subgroups the $F$-isomorphism
\[
 R_G \longrightarrow \lim_{E\in \cat A(G)_\fp} R_E\,.
 \]
 established above, yields a map of rings
 \begin{equation}
 \label{eq:stratification}
 R_G/I_G(M) \longrightarrow \lim_{E\in \cat A(G)_\fp} R_E/I_E(M)    
 \end{equation}
 with the property that a $p$-power of any element of the target is in the image of the map; it also has a nilpotent kernel because the map $\rho$ in \eqref{eq:rho} is nil faithful. Thus we conclude that the map above is an $F$-isomorphism.
 Passing to spectra, yields the following result.

 \begin{theorem}
 \pushQED{\qed}
 \label{th:stratification}
 The map \eqref{eq:stratification} induces a homeomorphism of supports:    
 \[
 \colim_{E\in \cat A(G)_\fp} \supp_{R_E}(M{\downarrow_E}) \longiso \supp_{R_G}(M)\,. \qedhere
 \]
 \end{theorem}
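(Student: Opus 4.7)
The plan is to pass from the $F$-isomorphism of rings \eqref{eq:stratification}, established in the discussion immediately preceding the theorem, to a homeomorphism on spectra, and then exchange the spectrum of a finite limit with a colimit of spectra. By definition of cohomological support, $\supp_{R_G}(M) = \Spec(R_G/I_G(M))$ and $\supp_{R_E}(M{\downarrow_E}) = \Spec(R_E/I_E(M))$ for each $E \in \cat A(G)_\fp$, so the theorem reduces to showing that the composite
\[
\colim_{E\in \cat A(G)_\fp} \Spec(R_E/I_E(M)) \longrightarrow \Spec\!\left(\lim_{E\in \cat A(G)_\fp} R_E/I_E(M)\right) \longrightarrow \Spec(R_G/I_G(M))
\]
is a homeomorphism.

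For the second arrow, any $F$-isomorphism of graded-commutative rings is a universal homeomorphism on $\Spec$: the nilpotent kernel is invisible, while surjectivity on $p$-th powers makes the ring map integral and bijective on spectra, hence closed. The $F$-isomorphism property of \eqref{eq:stratification} has already been justified, but for completeness: surjectivity-up-to-$p$th-powers descends from Quillen's classical $F$-isomorphism $R_G \to \lim_E R_E$ upon quotienting by the compatible family $\{I_G(M), I_E(M)\}$, while nilpotence of the kernel follows by combining nil faithfulness of $\rho$ from Chouinard's theorem (cf.\ Lemma~\ref{le:map1}) with dualisability of $M$---if $r \in R_G$ has $\rho^*_E(r) \in I_E(M)$ for every $E$, then $\rho_E(r \cdot \mathrm{id}_M) = 0$ for all $E$, so $(r \cdot \mathrm{id}_M)^{\otimes n} = 0$ for some $n$, and since $M$ is a retract of $M^{\otimes n} \otimes (M^\vee)^{\otimes(n-1)}$ one concludes $r^n \in I_G(M)$.

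For the first arrow, I would invoke the standard exchange of $\Spec$ with a finite limit of finite modules over a common base ring: the category $\cat A(G)_\fp$ is finite, and each $R_E/I_E(M)$ is a finite module over $R_G/I_G(M)$, because $H^*(E,k)\phat$ is finite over $H^*(G,k)\phat$ and by construction the image of $I_G(M)$ is contained in $I_E(M)$. This is exactly the setup of \cite[Lemma~8.11]{Quillen:1971c}, already invoked in the proof of Lemma~\ref{le:map3}. The main technical point is precisely this finiteness of $R_E/I_E(M)$ over $R_G/I_G(M)$; once it is in place, both arrows become formal, and their composite yields the asserted homeomorphism.
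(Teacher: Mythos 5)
Your proposal follows the same route as the paper. The paper's own ``proof'' is precisely the terse discussion preceding the theorem statement: it asserts that \eqref{eq:stratification} is an $F$-isomorphism (surjectivity-up-to-$p$-th-powers descending from Quillen's $F$-isomorphism for the units, nilpotence of the kernel from nil-faithfulness of $\rho$), and then passes to spectra, exchanging $\Spec$ of the finite limit with the colimit of spectra via Quillen's Lemma~8.11, exactly as in Lemma~\ref{le:map3}. What you add is a welcome unpacking of the nilpotence step that the paper leaves implicit: you correctly observe that $\rho^*_E(r)\in I_E(M)$ for all $E$ means $\rho_E(r\cdot\mathrm{id}_M)=0$ for all $E$, so nil-faithfulness gives $r^n\cdot\mathrm{id}_{M^{\otimes n}}=(r\cdot\mathrm{id}_M)^{\otimes n}=0$, and then the retraction of $M$ off $M^{\otimes n}\otimes (M^\vee)^{\otimes(n-1)}$, available since $M$ is dualisable, forces $r^n\in I_G(M)$. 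This is a correct and clean justification of the claim the paper states without proof. The rest of the argument (finiteness of $R_E/I_E(M)$ over $R_G/I_G(M)$, hence applicability of Quillen's Lemma~8.11, and the fact that an $F$-isomorphism of noetherian graded rings induces a homeomorphism on $\Spec$) matches the paper's reasoning.
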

 
 This is an analogue, for local dualisable modules, of the Quillen stratification for the cohomology of finite dimensional modules proved by Avrunin and Scott~\cite[Theorem~3.3]{Avrunin/Scott:1982a}.
 
\begin{ack}
We thank the American Institute of Mathematics for its hospitality and the support through the SQuaRE program. Our work was also supported by the National Science Foundation under Grant No.\ DMS-1928930 and by the Alfred P.\ Sloan Foundation under grant G-2021-16778, while three of the authors were in residence at the Simons Laufer Mathematical Sciences Institute in Berkeley, California, during the Spring 2024 semester. HK and JP thank the University of Utah for its hospitality during their visit in March of 2025.

SBI was partly supported by NSF grant DMS-2001368, HK was partly supported by the Deutsche Forschungsgemeinschaft (SFB-TRR 358/1 2023 - 491392403), and JP was partly supported by NSF grants DMS-1901854, 2200832 and the  Brian and Tiffinie Pang faculty fellowship.
\end{ack}

\bibliographystyle{amsplain}
\newcommand{\noopsort}[1]{}
\providecommand{\bysame}{\leavevmode\hbox to3em{\hrulefill}\thinspace}
\providecommand{\MR}{\relax\ifhmode\unskip\space\fi MR }

\providecommand{\MRhref}[2]{%
  \href{http://www.ams.org/mathscinet-getitem?mr=#1}{#2}
}
\providecommand{\href}[2]{#2}

\end{document}